\newtheorem{lem}{Lemma}[section]
\newtheorem{thm}[lem]{Theorem}
\newtheorem{Proposition}[lem]{Proposition}
\begin{document}

\title{Menger-type connectivity of line graphs of faulty hypercubes\footnotetext{This work is supported by NSFC (No. 11971406, 12171402).}}
\author{Huanshen Jia$^{a, b}$ and Jianguo Qian$^{a,}$\footnote{Corresponding author. E-mail: jgqian@xmu.edu.cn (J.G. Qian)}\\
\small $^{a}$School of Mathematical Sciences, Xiamen University, Xiamen 361005, PR China\\
\small $^{b}$School of Mathematics and Statistics, Qinghai Minzu University, Xining, 810007, PR China}
\date{}
\maketitle
{\small{\bf Abstract.}\quad
A connected graph $G$ is called strongly Menger edge connected if $G$ has min\{deg$_G(x)$, deg$_G(y)$\} edge-disjoint paths between any two distinct vertices $x$ and $y$ in $G$.  In this paper, we consider two types of strongly Menger edge connectivity of the line graphs of $n$-dimensional hypercube-like networks with faulty edges, namely the  $m$-edge-fault-tolerant and $m$-conditional edge-fault-tolerant strongly Menger edge connectivity. We show that the line graph of any $n$-dimensional hypercube-like network  is $(2n-4)$-edge-fault-tolerant strongly Menger edge connected for $n\geq 3$ and $(4n-10)$-conditional edge-fault-tolerant strongly Menger edge connected for $n\geq 4$. The two bounds for the maximum number of faulty edges are best possible.}

\vskip 0.3cm
\noindent{\bf Keywords:} strong Menger edge connectivity; edge fault-tolerance, line graph of $n$-dimensional hypercube-like network

\section{Introduction}

With the large-scale increase in the number of nodes or links in a network, the probability of  node or link failure is very high, and fault tolerance has become an inevitable measure to ensure reliable communication. The node connectivity and link connectivity are topologically two important parameters to measure the fault tolerance and the reliability of networks. That is, the higher the connectivity is, the more reliable the network is.

In terms of graph theoretical terminology, the following Menger's Theorem provides a local point of view concerning the node connectivity and link  connectivity.
\begin{thm}\label{thm 1.1}  \cite{Menger} For any two non-adjacent vertices $u$ and $v$ in a graph $G$,
 	
(1).  the minimum size of an $(u, v)$-cut in $G$ equals the maximum number of disjoint $(u, v)$-path;
 	
(2). the minimum size of an $(u, v)$-edge cut in $G$ equals the maximum number of edge-disjoint $(u, v)$-path.
 \end{thm}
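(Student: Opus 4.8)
The plan is to prove each part as a pair of inequalities, where the bound ``maximum number of paths $\le$ minimum cut size'' is immediate and the reverse inequality carries all the content. For part (2) I would recast $G$ as a unit-capacity flow network with source $u$ and sink $v$: every edge gets capacity $1$. A family of $k$ edge-disjoint $(u,v)$-paths induces an integral flow of value $k$, and conversely the flow-decomposition lemma converts any integral $(u,v)$-flow of value $f$ into $f$ edge-disjoint paths after cancelling circulations; hence the maximum number of edge-disjoint paths equals the value of a maximum integral flow. Because all capacities are integral, the max-flow--min-cut theorem supplies a maximum flow that is integral and whose value equals the capacity of a minimum cut, and with unit capacities the capacity of a minimum cut is exactly the size of a minimum $(u,v)$-edge-cut. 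Stringing these equalities together yields part (2).

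For part (1) I would reduce the vertex version to part (2) via the standard vertex-splitting gadget. I would replace each vertex $w \notin \{u,v\}$ by two copies $w^{-}, w^{+}$ joined by an arc $w^{-}w^{+}$ of unit capacity, route every edge formerly incident to $w$ into $w^{-}$ or out of $w^{+}$ as appropriate, and give the original edges capacity large enough (say $\ge k+1$) that no minimum cut uses them. In the resulting network, minimum cuts may be taken to consist only of internal arcs, and these correspond bijectively to $(u,v)$-vertex-separators of $G$, while edge-disjoint paths correspond to internally disjoint $(u,v)$-paths; applying part (2) to the gadget then gives part (1). The hypothesis that $u$ and $v$ are non-adjacent is what guarantees a finite separator exists.

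Both easy directions reduce to a single observation: since every $(u,v)$-path meets every $(u,v)$-cut, and the members of a disjoint family share no internal vertex (respectively no edge), distinct paths consume distinct elements of any cut, so the number of disjoint paths never exceeds the cut size. The main obstacle is thus the reverse inequality, and it localizes to one point---justifying that a maximum flow can be chosen integral and decomposed into paths. This is precisely where the unit-capacity structure and the integrality guarantee of max-flow--min-cut do the work. If I wished to avoid flow machinery entirely, I would instead run the classical induction on $|E(G)|$: delete a well-chosen edge and split into cases according to whether a minimum separator of the reduced graph lies near $u$ or near $v$; the intricate bookkeeping in that case analysis is exactly why I prefer the flow-based route.
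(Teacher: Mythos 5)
The paper offers no proof of this statement at all: it is Menger's classical theorem (1927), cited as background via \cite{Menger} and used later only through its part (2) in Theorems \ref{thm 3.3} and \ref{thm 3.2.}. So there is no ``paper route'' to compare against; your proposal must be judged on its own, and it is correct. The flow-based argument you outline is one of the standard proofs: the easy inequality (disjoint paths never outnumber a cut) is exactly as you say, part (2) follows from max-flow--min-cut with unit capacities plus integrality and flow decomposition, and part (1) reduces to part (2) by the vertex-splitting gadget, with non-adjacency of $u$ and $v$ ensuring a vertex separator exists at all. Two small details deserve a sentence each if you write this out in full: first, since $G$ is undirected you should replace each edge by a pair of oppositely oriented unit-capacity arcs and note that an integral flow can be chosen to use at most one arc of each pair (otherwise cancel the opposite units), so that the decomposed paths really are edge-disjoint paths of $G$; second, in the splitting gadget you should check both directions of the correspondence --- that internal arcs crossing a minimum cut yield a vertex separator of the same size, \emph{and} that arc-disjoint paths in the gadget project to internally disjoint paths in $G$ (this is where the unit capacity on $w^{-}w^{+}$ is used). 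With those details filled in, your argument is a complete and standard proof of the theorem the paper takes for granted.
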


 Based on the above theorem, various extensions of Menger-type connectivity were introduced to meet practical requirements in the literature. In \cite{Oh}, Oh et al. proposed the notion of strong Menger connectivity, which is also referred as the maximal local-connectivity \cite{Chen}. Qiao et al. \cite{Qiao} and Li et al. \cite{Li4} introduced strong Menger edge-connectivity as follows.

 \noindent{\bf Definition 1.2} \cite{Qiao,Li4}.
 A connected graph $G$ is called strongly Menger edge connected if, for any two distinct vertices $u$ and $v$ in $G$, there are min$\{\deg_G(u),\deg_G(v)\}$ edge-disjoint $(u,v)$-paths between $u$ and $v$.

Further, in the same paper,  Li et al. introduced two types of  edge fault tolerance of a graph with respect to strongly Menger edge connectivity.

 \noindent{\bf Definition 1.3} \cite{Li4}.
 Let $m$ be a positive integer and $G$ a connected graph. Then

 (1). $G$ is $m$-edge-fault-tolerant strongly Menger edge connected if $G-F$ is strongly Menger edge connected for any edge set $F\subseteq E(G)$ with $|F|\leq m$.

 (2). $G$ is $m$-conditional edge-fault-tolerant strongly Menger edge connected if $G-F$ is strongly Menger edge connected for any edge set $F\subseteq E(G)$ with $|F|\leq m$ and $\delta (G-F)\geq 2$.

 The study on fault-tolerant strongly Menger edge connectivity of various types of hypercubes received wide attention. Qiao et al. \cite{Qiao} proved that all $n$-dimensional hypercubes (resp. folded hypercubes) are $(2n-4)$ (resp. $(2n-2)$-conditional edge-fault-tolerant strongly Menger edge connected for $n\geq 5$. Cheng et al. \cite{Cheng2} proved that the $n$-dimensional folded hypercube is $(3n-5)$-conditional edge-fault-tolerant strongly Menger edge connected for $n\geq 5$. Li et al. \cite{Li4} proved that the $n$-dimensional balanced hypercube is $(2n-2)$-edge-fault-tolerant strongly Menger edge connected and $(6n-8)$-conditional edge-fault-tolerant strongly Menger edge connected for $n\geq 2$. Li et al. \cite{Li5} also proved that all $n$-dimensional hypercube-like networks are $(n-2)$-edge-fault-tolerant strongly Menger edge connected and $(3n-8)$-conditional edge-fault-tolerant strongly Menger edge connected for $n\geq 3$. Ma et al. \cite{Ma} proved that the augmented cube is $(4n-8)$-conditional edge-fault-tolerant strongly Menger edge connected for $n\geq 4$.

 In recent years, a variety of data center networks have been proposed based on some special  topology, such as BCube\cite{Guo} based on generalized hypercube, CamCube\cite{Abu-Libdeh} based on $k$-ary $3$-cube and Fat-tree\cite{Al-Fares} based on fat-trees interconnection network. Among many interconnection networks, hypercube is one of the most versatile and efficient networks for parallel computation. However, when dealing with very large scale parallel computers, the use of hypercube networks is greatly limited due to technical limitations. Therefore, several variants of hypercubes are proposed based on the hypercube, such as crossed cube, M$\ddot{o}$bius cube and locally twisted cubes, which overcome the above limitation, and their diameters are about half of the diameter of the hypercubes. Researchers proposed a unified definition of these variants, Vaidya et al. \cite{Vaidya} introduced a class of hypercube-like interconnection networks, called HL-graphs, which are also called BC networks by Fan et al. \cite{Fan}. We refer to \cite{Li5, Li6, Zhou} for more results about hypercube-like networks.

 Theoretically, a hypercube-like network can be generally defined as follows. Let $G_1$ and $G_2$ be two disjoint graphs with $n$ vertices. Let $f$ be a bijection between $V(G_1)$ and $V(G_2)$, i.e., $f:V(G_1)\rightarrow V(G_2)$. We denote by $G_1\oplus_f G_2$ the graph obtained from $G_1$ and $G_2$ by adding $n$ new edges $(v,f(v)), v\in V(G_1)$ and call them the $f$-{\it edges}. In terms of the operation $\oplus_f$, an $n$-dimensional hypercube-like network is defined recursively as follows:

 (1). $K_2$ is the only $1$-dimensional hypercube-like network;

 (2). For $n\geq 2$, if $Q^1_{n-1}$ and $Q^2_{n-1}$ are two $(n-1)$-dimensional hypercube-like networks then $Q^1_{n-1}\oplus_fQ^2_{n-1}$ is an $n$-dimensional hypercube-like network for any bijection $f$.

By the above definition, it is convenient to code a vertex of an $n$-dimensional hypercube-like network by a $\{0,1\}$-string $a_na_{n-1}\cdots a_1$ of length $n$. For an example, if $f$ is the identical bijection at each recurrence step, then the resulting network is exactly the usual hypercube network, in which two vertices $0a_{n-1}\cdots a_1$ and $1b_{n-1}\cdots b_1$ are joined by an edge if and only if $a_i=b_i$ for every $i\in\{1,2,\ldots,n-1\}$. Intuitively, we can see that the $f$-edges in a usual hypercube $Q^1_{n-1}\oplus_fQ^2_{n-1}$ network  are `not crossed', as illustrated in Figure 1 (a) when $n=3$. In contrast, we can also choose a proper bijection $f$ so that the $f$-edges are `crossed'. For an example, Efe \cite{Efe} introduced a type of edge-crossed hypercube-like network, called the {\it $n$-dimensional crossed cube} and denoted by $CQ_n$, in which the bijection $f$ admits a particular rule, that is, two nodes $0a_{n-1}\cdots a_1$ and $1b_{n-1}\cdots b_1$ are joined by an edge  if and only if\\
 1). $a_{n-1}=b_{n-1}$ if $n$ is even, and\\
 2). $a_{2i}a_{2i-1} \sim b_{2i}b_{2i-1}$ for $i$ with $1\leq i<\lfloor (n-1)/2\rfloor+1$,\\
 where, for two strings $aa'$ and $bb'$ of length 2, $aa'\sim bb'$ means that $(aa',bb')\in\{(00,00),(10,10),(01,11),(11,01)\}$ and are called {\it pair-related}  \cite{Efe}. When $n=3$, the $f$-edges in the $3$-dimensional crossed cube is illustrated as in Figure 1 (b).
\begin{figure}[htp]
\begin{center}
\includegraphics[width=140mm]{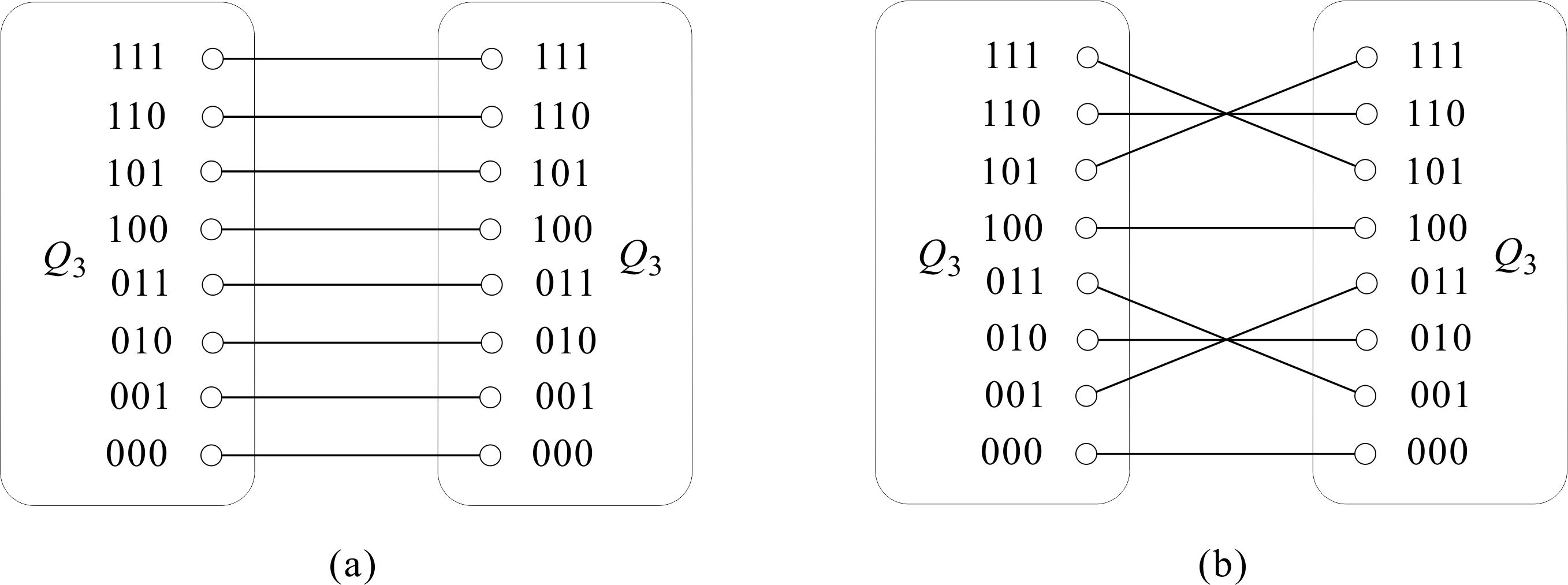}
\caption{(a). The identical bijection; (b). A crossed bijection.}
\end{center}
\end{figure}

 Indeed, based on various edge-cross patterns, a number of edge-crossed hypercube-like networks were defined in the literature to meet some particular requirements: Cull et al. \cite{Cull} introduced a kind of hypercube-like networks, called the $n$-dimensional {\it 0-M$\ddot{o}$bius cube} $0$-$Q_n$ (resp. {\it 1-M$\ddot{o}$bius cube} $1$-$Q_n$), in which the bijection $f$ admits a particular rule, that is, two vertices $0a_{n-1}\cdots a_1$ and $1b_{n-1}\cdots b_1$ are joined by an edge if and only if $a_i=b_i$ (resp. $a_i=\bar {b}_i$) for every $i\in\{1,2,\ldots,n-1\}$, where for a binary bit $b\in \{0,1\}$, $\bar{b}=1$ if and only if $b=0$. Yang et al. \cite{Yang} proposed the locally twisted cube $ LTQ_n $, in which two vertices $0a_{n-1}\cdots a_1$ and $1b_{n-1}\cdots b_1$ are joined by an edge if and only if $0a_{n-1}a_{n-2} \cdots a_1=1(b_{n-1}+b_1)b_{n-2}\cdots b_1$, where, for $a,b \in \{0,1\}^n$, $a+b$ denote the (bitwise modulo $2$) sum of $a$ and $b$.

In this paper, we are interested in a variety of  hypercube-like networks, namely the line graph of   hypercube-like networks.  The line graph $L(G)$ of a graph $G$ is the graph with $|V (L(G))|=|E(G)|$, such that two vertices of $L(G)$ are adjacent provided their corresponding edges share (incident with) a common end-vertex of $G$. Based on its original graph, the line graph may inherit some nice structural properties. For an example, the line graph of a graph with higher connectivity or smaller diameter may have  higher connectivity or smaller diameter either  \cite{Chartrand,Zhang}.

 In practice, the line graph of hypercube-like networks may serve as a nice topology for Data center networks. With the development of data center and cloud computing, data center network has become a platform to connect large-scale servers of data center to realize online cloud services. The performance of cloud computing largely depends on the performance of the data center network. In 2018, Wang et al. \cite{Wang} proposed a new high-performance and server-centric data center network BCDC. The $n$-dimensional  BCDC is defined on a particular hypercube-like network, that is, the $n$-dimensional crossed cube $CQ_n$ (see above for its definition), by inserting a new node at each edge of  $CQ_n$. The resulting graph, denoted by $A_n$, is called the {\it original graph} of the  BCDC, in which the nodes in  $CQ_n$ are the switches while the inserted nodes are the servers. The {\it logical graph} of the BCDC is therefore defined as the line graph of   $CQ_n$ and is denoted by $B_n$.  Figure 2 illustrates the original graph and logical graph of the $3$-dimensional  BCDC in which the server nodes are coded by the pair of the codes of its two adjacent switch nodes.
\begin{figure}[htbp]
	\centering
	{
		\begin{minipage}[t]{1\textwidth}
			\centering
			\includegraphics[width=13cm]{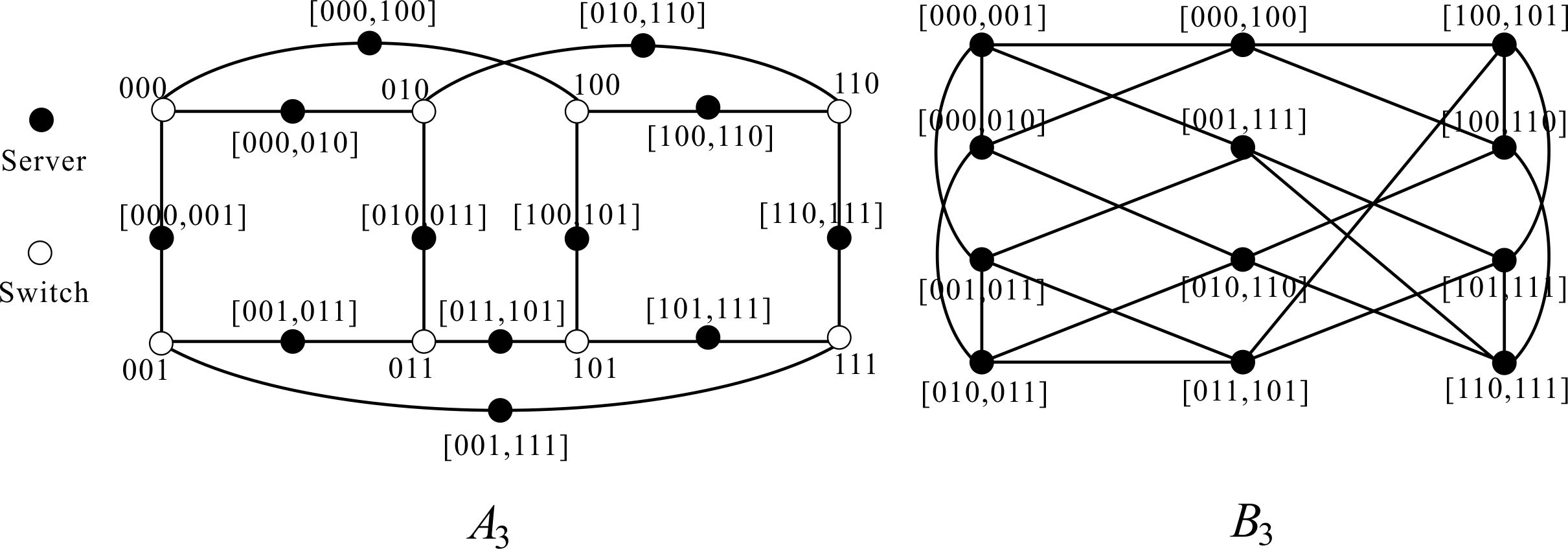}
			\caption{The original graph $A_3$ and logical graph $B_3$ of the $3$-dimensional BCDC.}
		\end{minipage}%
	}%
\end{figure}

As pointed in \cite{Wang}, BCDC meets the goal of using only low-end
commodity switches by putting routing computation into servers purely. Kan et al. \cite{Kan} showed that $B_n$ is superior to other data center network such as DCell \cite{Lv1} and Fat-Tree \cite{Al-Fares} through practical experiment and analyzed the data communication, fault tolerance and node-disjoint paths. Recently, Lv et al. \cite{Lv2} studied the $g$-restricted connectivity and $h$-extra connectivity of $B_n$.

Motivated by BCDC networks, we consider the connectivity of the  line graphs of general $n$-dimensional hypercube-like networks. We prove that  the line graph of any $n$-dimensional hypercube-like network is $(2n-4)$-edge-fault-tolerant strongly Menger edge connected for $n\geq 3$ and $(4n-10)$-conditional edge-fault-tolerant strongly Menger edge connected for $n\geq 4$. The two bounds for the maximum number of faulty edges are best possible.

\section{Preliminaries}

In the following we use graph theoretical terminologies. For a graph $G$, we use $V(G)$ and $E(G)$ to denote the vertex set and edge set of $G$, respectively.  For a vertex $u\in V(G)$, we use $N_G(u)$ to denote the set of the vertices adjacent to $u$ in $G$ and use $\deg_G(u)$ to denote the degree of $u$ in $G$, i.e.,  $\deg_G(u)=|N_G(u)|$. In particular, if $\deg_G(u)=k$ for every $u\in V(G)$, then we call $G$ $k$-{\it regular}. The  minimum degree of a graph $G$ is defined by $\delta (G)=\min\{\deg_G(v): v\in V(G)\}$. For a set $F\subseteq V(G)\cup E(G)$, we use $G-F$ to denote the graph obtained from $G$ by deleting $F$ from $G$. For two disjoint vertex sets or subgraphs $H_1$ and $H_2$, we use $E(H_1,H_2)$ to denote the set of the edges with one end in $H_1$ and the other in $H_2$. If $H_1=\{u\}$, we use $E(u, H_2)$ instead of $E(\{u\}, H_2)$ for simplicity. For two vertices $v_0, v_k\in V(G)$, an $(v_0, v_k)$-{\it path} of length $k$ is a finite sequence of distinct vertices $v_0v_1\cdots v_{k}$ such that $(v_i,v_{i+1})\in E(G)$ for $0\leq i\leq k-1$. A set $F\subseteq E(G)$ is an $(u, v)$-{\it edge cut} if $G-F$ has no $(u, v)$-path. The connectivity $\kappa (G)$ of $G$ is the minimum size of a vertex set $F$ such that $G-F$ is disconnected or has only one vertex. The edge connectivity $\lambda(G)$ of $G$ is defined analogously.

In the following, we use $Q_n$ to denote an arbitrary $n$-dimensional hypercube-like network. For $n\geq 2$, we denote by ${\cal L}_n$ the class of the line graphs of all $n$-dimensional hypercube-like networks $HL_n $, i.e.,
$${\cal L}_n=\{L(Q_n): Q_n\in HL_n\}.$$

For an $n$-dimensional hypercube-like network $Q_n=Q^1_{n-1}\oplus_fQ^2_{n-1}$, we call a vertex of the line graph $L(Q_n)$ that corresponds to an $f$-edge of $Q_n$ an $f$-{\it vertex} of $L(Q_n)$. For $L_n=L(Q^1_{n-1}\oplus_fQ^2_{n-1})$, we denote  $L_{n-1}^1=L(Q_{n-1}^1),L_{n-1}^2=L(Q_{n-1}^2)$ and denote by $F_n$ the set of the $f$-vertices of $L_n$.
\begin{lem}\label{lem 2.1.} \cite{Bondy}
For any graph $G$, $\kappa(G)\leq \lambda (G) \leq \delta(G)$.
\end{lem}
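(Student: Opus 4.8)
The statement packages two classical inequalities, $\lambda(G)\le\delta(G)$ and $\kappa(G)\le\lambda(G)$, and the plan is to prove each in turn. Both are immediate when $G$ is disconnected or has a single vertex, so I would assume throughout that $G$ is connected with $|V(G)|\ge 2$.

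For $\lambda(G)\le\delta(G)$ the idea is simply to exhibit a small edge cut. I would take a vertex $v$ with $\deg_G(v)=\delta(G)$ and consider the set $F=E(v,V(G)\setminus\{v\})$ of all edges incident with $v$. Deleting $F$ isolates $v$, so $F$ is an edge cut, and therefore $\lambda(G)\le|F|=\delta(G)$.

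For $\kappa(G)\le\lambda(G)$ I would set $\lambda=\lambda(G)$, fix a minimum edge cut, and let $(S,\bar S)$ be the associated partition of $V(G)$ into two nonempty parts with $|E(S,\bar S)|=\lambda$. If every vertex of $S$ is adjacent to every vertex of $\bar S$, then $\lambda=|S|\,|\bar S|\ge|S|+|\bar S|-1=|V(G)|-1\ge\kappa(G)$ and we are done, since $\kappa(G)\le|V(G)|-1$ always holds. Otherwise I would pick nonadjacent vertices $u\in S$ and $v\in\bar S$ and construct a vertex set $W$ by choosing, for each cut edge $x_ey_e$ with $x_e\in S$ and $y_e\in\bar S$, the endpoint $x_e$ if $x_e\ne u$ and the endpoint $y_e$ if $x_e=u$. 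The nonadjacency of $u$ and $v$ ensures $u,v\notin W$, while $|W|\le\lambda$ since $W$ contributes at most one vertex per cut edge.

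The step that needs the most care, and the one I regard as the crux, is checking that $W$ is genuinely a $(u,v)$ vertex cut. For this I would argue that any $(u,v)$-path first crosses from $S$ to $\bar S$ along some cut edge, and the selection rule places an endpoint of that edge, distinct from both $u$ and $v$, into $W$; hence every such path meets $W$. Consequently $G-W$ is disconnected with $u$ and $v$ in different components, so $\kappa(G)\le|W|\le\lambda(G)$, which together with the first inequality yields $\kappa(G)\le\lambda(G)\le\delta(G)$.
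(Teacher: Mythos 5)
Your proof is correct. There is nothing in the paper to compare it against: the lemma is stated with a citation to the textbook of Bondy and Murty \cite{Bondy} and no proof is given. Your argument is essentially the classical one from that literature: $\lambda(G)\le\delta(G)$ by cutting the edges at a minimum-degree vertex, and $\kappa(G)\le\lambda(G)$ via a minimum edge cut $(S,\bar{S})$, split into the case where every vertex of $S$ is adjacent to every vertex of $\bar{S}$ (giving $\lambda\ge|S|\,|\bar{S}|\ge|V(G)|-1\ge\kappa(G)$) and the case with nonadjacent $u\in S$, $v\in\bar{S}$, where selecting one endpoint per cut edge yields a $(u,v)$ vertex cut of size at most $\lambda$; your check that the selected set avoids both $u$ and $v$ (using nonadjacency when $x_e=u$) and meets every $(u,v)$-path is the crux, and it is sound. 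Your per-edge selection rule is a mild variant of the usual construction (take the neighbours of $u$ in $\bar{S}$ together with the vertices of $S\setminus\{u\}$ having a neighbour in $\bar{S}$), and buys the same bound. One cosmetic remark: the equality $\lambda=|S|\,|\bar{S}|$ in your first case presumes a simple graph; writing $\lambda\ge|S|\,|\bar{S}|$ costs nothing and covers multigraphs as well.
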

\begin{lem}\label{lem 2.2.} \cite{Fan}
	For $Q_n \in HL_n$, $\kappa(Q_n)=\lambda(Q_n)= n$ for any $n \geq 1$.	
\end{lem}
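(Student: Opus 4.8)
The plan is to reduce the statement to a single inequality via the standard chain. Since each recursive step $Q_n=Q^1_{n-1}\oplus_f Q^2_{n-1}$ attaches exactly one $f$-edge to every vertex, a trivial induction shows that $Q_n$ is $n$-regular, so $\delta(Q_n)=n$; Lemma \ref{lem 2.1.} then gives $\kappa(Q_n)\le\lambda(Q_n)\le n$. It therefore suffices to prove the reverse bound $\kappa(Q_n)\ge n$, that is, that $Q_n-S$ is connected for every $S\subseteq V(Q_n)$ with $|S|\le n-1$; the equalities $\kappa(Q_n)=\lambda(Q_n)=n$ follow at once.

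I would establish $\kappa(Q_n)\ge n$ by induction on $n$, the base case $n=1$ being $\kappa(K_2)=1$. For the inductive step, write $Q_n=Q^1_{n-1}\oplus_f Q^2_{n-1}$ and split a given vertex set $S$ as $S_i=S\cap V(Q^i_{n-1})$, so that $|S_1|+|S_2|=|S|\le n-1$. The argument then divides into two regimes according to how $S$ distributes across the two $(n-1)$-dimensional halves.

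First, if $|S_1|\le n-2$ and $|S_2|\le n-2$, then each $|S_i|$ is strictly below $\kappa(Q^i_{n-1})=n-1$, so by the induction hypothesis both $Q^1_{n-1}-S_1$ and $Q^2_{n-1}-S_2$ are connected. It remains to exhibit one surviving $f$-edge joining them: there are $2^{n-1}$ $f$-edges, each deleted vertex destroys at most one of them, and $2^{n-1}\ge n>n-1\ge|S|$, so at least one $f$-edge survives and $Q_n-S$ is connected.

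Second, in the remaining regime one side carries the entire cut; by symmetry take $|S_1|=n-1$ and $|S_2|=0$. Now $Q^2_{n-1}-S_2=Q^2_{n-1}$ is intact and connected, while $Q^1_{n-1}-S_1$ may well be disconnected. The key observation is that $S_2=\emptyset$ forces every surviving vertex $u\in V(Q^1_{n-1})-S_1$ to retain its $f$-edge $(u,f(u))$, whose other endpoint $f(u)$ lies in the connected graph $Q^2_{n-1}$; hence each such $u$ is linked to the intact half and $Q_n-S$ is again connected. This second regime is where I expect the main difficulty: within-half connectivity is no longer guaranteed, and one must instead route through $Q^2_{n-1}$ using the fact that the $f$-edges form a perfect matching between the halves — precisely the place where the defining product structure $\oplus_f$ is doing the work.
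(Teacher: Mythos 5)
Your proof is correct. Note, however, that the paper does not prove this lemma at all: it is quoted from \cite{Fan} as a known result, so there is no in-paper argument to compare yours against. Your argument is the standard self-contained one and it is sound. The reduction via Lemma \ref{lem 2.1.} to the single inequality $\kappa(Q_n)\geq n$ works because the chain $n\leq\kappa(Q_n)\leq\lambda(Q_n)\leq\delta(Q_n)=n$ collapses to equalities, and the $n$-regularity you invoke follows by the trivial induction you indicate (each $\oplus_f$ step adds exactly one $f$-edge per vertex). Your case split is exhaustive: since $|S_1|+|S_2|\leq n-1$, either both $|S_i|\leq n-2$, or else one side has $n-1$ deleted vertices and the other none. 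In the first regime the induction hypothesis keeps both halves connected, and the counting $2^{n-1}\geq n>n-1\geq|S|$ (valid for all $n\geq 2$, with equality $2^{n-1}=n$ at $n=2$) leaves a surviving $f$-edge; the only point worth making explicit is that each deleted vertex destroys at most one $f$-edge precisely because the $f$-edges form a perfect matching, each vertex lying on exactly one of them. In the second regime, the matching structure again does the work: every surviving vertex of the damaged half keeps its private $f$-edge into the intact, connected half, so connectivity is immediate and no within-half connectivity is needed — your anticipation of difficulty there is unfounded, as the argument you give already closes it. One pedantic remark: to conclude $\kappa(Q_n)\geq n$ from ``$Q_n-S$ is connected for all $|S|\leq n-1$'' one should also observe that $Q_n-S$ has at least two vertices, which holds since $2^n-(n-1)\geq 2$ for $n\geq 1$.
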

\begin{thm}\label{thm 2.3.} \cite{Chartrand}
	If a graph $G$ is $n$-edge-connected, then its line graph $L(G)$ is $(2n-2)$-edge-connected.
\end{thm}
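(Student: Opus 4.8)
The plan is to bound the edge connectivity of $L(G)$ directly, via a counting argument on an arbitrary minimum edge cut, rather than through Menger's theorem. Since $G$ is $n$-edge-connected it is connected, and by Lemma~\ref{lem 2.1.} we have $\delta(G)\ge\lambda(G)=n$; hence $L(G)$ is connected and for every edge $e=uv$ of $G$ one has $\deg_{L(G)}(e)=\deg_G(u)+\deg_G(v)-2\ge 2n-2$, so $\delta(L(G))\ge 2n-2$ and the target value is the best one can hope for from the degree bound alone. What remains, and what the argument must really establish, is the matching lower bound $\lambda(L(G))\ge 2n-2$.

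The key step is a reformulation of edge cuts of $L(G)$ in terms of $G$. A minimum edge cut $W$ of $L(G)$ induces a partition $E(G)=E_1\cup E_2$ into two nonempty classes (recall $V(L(G))=E(G)$), and each edge of $W$ corresponds to a pair of $G$-edges that share an end-vertex, one lying in $E_1$ and the other in $E_2$. Call a vertex $v$ of $G$ \emph{split} if it is incident with edges from both $E_1$ and $E_2$, and let $a_v,b_v$ denote the numbers of its incident edges in $E_1,E_2$. Then $v$ contributes exactly $a_vb_v$ edges to $W$, so
\[
|W|=\sum_{v\ \mathrm{split}}a_vb_v .
\]
Because $G$ is connected, at least one split vertex exists: otherwise $V(G)$ would partition into the vertices with all incident edges in $E_1$ and those with all incident edges in $E_2$, with no edge between the two parts.

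I would then distinguish two cases. If there are at least two split vertices, then for each such $v$ the inequality $(a_v-1)(b_v-1)\ge 0$ gives $a_vb_v\ge a_v+b_v-1=\deg_G(v)-1\ge n-1$, and summing over any two split vertices yields $|W|\ge 2(n-1)=2n-2$. The main obstacle is the remaining case of a \emph{single} split vertex $v$, where the one term $a_vb_v$ must be bounded below by $2n-2$ on its own; the elementary estimate only delivers $n-1$, so edge-connectivity must be invoked more forcefully. Here I would argue structurally: writing $A$ (resp.\ $B$) for the vertices all of whose incident edges lie in $E_1$ (resp.\ $E_2$), one gets $V(G)=A\cup B\cup\{v\}$ with no edge between $A$ and $B$, and all $a_v$ edges of $v$ in $E_1$ join $v$ to $A$. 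Thus $E_G(A,\{v\}\cup B)$ consists of exactly these $a_v$ edges and is an edge cut of $G$, forcing $a_v\ge\lambda(G)=n$; symmetrically $b_v\ge n$, whence $a_vb_v\ge n^2\ge 2n-2$. In every case $|W|\ge 2n-2$, so $\lambda(L(G))\ge 2n-2$, as required. The one point demanding genuine care is this single-split-vertex case: one must verify that the $a_v$ crossing edges really form an edge cut of $G$ (both sides nonempty, and no $A$--$B$ edges), which is precisely where the uniqueness of the split vertex is used.
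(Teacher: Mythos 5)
Your proposal cannot be matched against a proof in the paper, because the paper gives none: Theorem~\ref{thm 2.3.} is imported as a black box from Chartrand and Stewart \cite{Chartrand}, and its only role is to feed the one-line proof of Lemma~\ref{lem 2.4.}. What you have written is therefore a self-contained replacement for the citation, and it is correct. Your reduction of a minimum edge cut $W$ of $L(G)$ to the bipartition $E(G)=E_1\cup E_2$, the identity $|W|=\sum_{v\ \mathrm{split}}a_vb_v$, and the two-case analysis (two or more split vertices versus exactly one) together give $|W|\ge 2n-2$ in all cases, since $a_vb_v\ge \deg_G(v)-1\ge n-1$ when $a_v,b_v\ge 1$, and $a_vb_v\ge n^2\ge 2n-2$ in the single-split-vertex case. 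The delicate step you flag --- that with a unique split vertex $v$ the set $E_G(A,\{v\}\cup B)$ really is an edge cut of $G$ of size $a_v$ --- does go through: $A\neq\emptyset$ because $v$ split forces $a_v\ge 1$ and every $E_1$-edge at $v$ ends at a non-split vertex, hence in $A$; and an $A$--$B$ edge would lie in $E_1\cap E_2=\emptyset$. Three small points to tighten when writing this up: (i) the counting identity for $|W|$ uses that two distinct edges of $G$ share at most one end-vertex, i.e., that $G$ is simple (true for hypercube-like networks and for the setting of \cite{Chartrand}); (ii) ``$n$-edge-connected'' gives $\lambda(G)\ge n$, not $\lambda(G)=n$, so the inequalities should be written with $\ge$ (nothing breaks); (iii) you implicitly use that a \emph{minimum} edge cut of a connected graph separates the graph into exactly two components and equals the set of edges between them --- worth one sentence. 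The trade-off versus the paper's choice is the usual one: the citation keeps the paper short, while your elementary counting argument makes the connectivity bound verifiable on the spot and makes transparent why $2n-2$ (and not $2n$) is the right threshold, namely the degree bound $\deg_{L(G)}(e)=\deg_G(u)+\deg_G(v)-2$.
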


\begin{lem}\label{lem 2.4.}
	Let $L_n\in{\cal L}_n$. Then $\kappa(L_n)=\lambda (L_n)=2n-2$ for $n \geq 2$.
\end{lem}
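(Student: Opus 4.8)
The plan is to pin both invariants between the same lower and upper bound $2n-2$. First I would record the two elementary facts that drive everything. Since $Q_n\in HL_n$ is built recursively by adding a perfect matching of $f$-edges at each step, an easy induction shows $Q_n$ is $n$-regular; consequently every vertex of $L_n$, corresponding to an edge $uv$ of $Q_n$, has degree $\deg_{Q_n}(u)+\deg_{Q_n}(v)-2=2n-2$, so $L_n$ is $(2n-2)$-regular and $\delta(L_n)=2n-2$. By Lemma~\ref{lem 2.1.} this already gives the upper bounds $\kappa(L_n)\le\lambda(L_n)\le\delta(L_n)=2n-2$.

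For the edge-connectivity lower bound I would invoke Lemma~\ref{lem 2.2.}, which says $Q_n$ is $n$-edge-connected, and then apply Theorem~\ref{thm 2.3.} directly: the line graph of an $n$-edge-connected graph is $(2n-2)$-edge-connected, so $\lambda(L_n)\ge 2n-2$. Together with the upper bound this settles $\lambda(L_n)=2n-2$, and no further work is needed for the edge-connectivity half.

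The substance of the lemma is the vertex-connectivity lower bound $\kappa(L_n)\ge 2n-2$, and here I would pass from a minimum vertex cut of $L_n$ to an edge cut of $Q_n$. Let $S$ be a minimum vertex cut of $L_n$ and let $F\subseteq E(Q_n)$ be the corresponding edge set, so $|F|=\kappa(L_n)$. Because adjacency in $L_n$ means sharing an endpoint in $Q_n$, the components of $L_n-S$ carry pairwise vertex-disjoint edge sets; writing $V_1$ for the endpoint set of one component and $\bar{A}=V(Q_n)\setminus V_1$, one checks that every $Q_n$-edge joining $V_1$ to $\bar{A}$ must lie in $F$, and that both $Q_n[V_1]$ and $Q_n[\bar{A}]$ span an edge (minimality of $S$ rules out superfluous edges of $F$ lying inside a single part). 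Hence $F$ contains a \emph{restricted} edge cut of $Q_n$, that is, an edge cut whose two sides each span an edge, so $\kappa(L_n)=|F|\ge\lambda'(Q_n)$, where $\lambda'$ denotes the restricted edge connectivity. It therefore suffices to prove $\lambda'(Q_n)\ge 2n-2$ for every $Q_n\in HL_n$; the matching upper bound $\lambda'(Q_n)\le 2n-2$ is immediate by cutting around a single edge.

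I expect the bound $\lambda'(Q_n)\ge 2n-2$ to be the main obstacle, since it is the one place where the crude estimate $\lambda(Q_n)=n$ does not suffice and the recursive structure of $HL_n$ must be exploited. The plan here is induction on $n$ via the decomposition $Q_n=Q^1_{n-1}\oplus_f Q^2_{n-1}$: given a restricted edge cut with parts $A,\bar{A}$, I would split $A$ and $\bar{A}$ across the two $(n-1)$-dimensional halves and distinguish cases according to whether the cut still separates an edge inside a half (where the induction hypothesis yields at least $2(n-1)-2$ cut edges there) or is concentrated on the $n$ $f$-edges (which, combined with the edge-connectivity of the halves, makes up the remaining deficit). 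The bookkeeping of these cases, especially handling parts that lie almost entirely in one half, and the fact that a general hypercube-like network may contain triangles so that naive density estimates must be made carefully, is the delicate point; alternatively one may cite the known $\lambda'$-optimality of hypercube-like (BC) networks to conclude $\lambda'(Q_n)=2n-2$ directly.
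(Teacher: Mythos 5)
Your proposal is correct, and for the edge-connectivity half it coincides exactly with the paper's proof: both obtain $\lambda(L_n)\geq 2n-2$ from Lemma~\ref{lem 2.2.} and Theorem~\ref{thm 2.3.}, and the matching upper bound from $(2n-2)$-regularity together with Lemma~\ref{lem 2.1.}. Where you genuinely diverge is the vertex-connectivity lower bound $\kappa(L_n)\geq 2n-2$. The paper's own proof never actually establishes this bound: it concludes $\kappa(L_n)=\lambda(L_n)=2n-2$ from the chain $\kappa\leq\lambda\leq\delta$, which only bounds $\kappa$ from \emph{above}, and the cited Theorem~\ref{thm 2.3.} concerns edge-connectivity only (the standard Chartrand--Stewart vertex statement would give merely $\kappa(L_n)\geq n$). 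You correctly identify this lower bound as the real content of the lemma and supply the missing argument: a vertex cut $S$ of $L_n$ corresponds to an edge set $F$ of $Q_n$ whose removal leaves at least two components each containing an edge, whence $\kappa(L_n)\geq\lambda'(Q_n)$, and $\lambda'$-optimality of hypercube-like networks gives $\lambda'(Q_n)=2n-2$. Your reduction to $\lambda'$ is argued soundly (endpoint sets of distinct components of $L_n-S$ are disjoint in $Q_n$, and every $Q_n$-edge between $V_1$ and its complement lies in $F$); the minimality of $S$ is not even needed there. The one caveat is that your induction proving $\lambda'(Q_n)\geq 2n-2$ is only sketched, so as written your argument is complete only via the citation route to the known $\lambda'$-optimality of BC networks; if the lemma is to be self-contained, that induction must be carried out. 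In exchange, your approach buys rigor the paper lacks on the $\kappa$ claim --- though it is worth noting that the remainder of the paper (Lemma~\ref{lem 3.2.}, Theorems~\ref{thm 3.3} and~\ref{thm 3.2.}) uses only the $\lambda$ part of this lemma, so the paper's gap on $\kappa$, which your argument repairs, is inconsequential downstream.
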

\begin{proof}
	By Lemma \ref{lem 2.2.} and Theorem \ref{thm 2.3.}, we have $L_n$ is $(2n-2)$-edge-connected. Since $L_n$ is a $(2n-2)$-regular graph and by Lemma \ref{lem 2.1.}, we have $\kappa(L_n)=\lambda (L_n)=2n-2$.
\end{proof}

\section{Edge fault-tolerant strong Menger edge connectivity of  line graph of hypercube-like networks}

The following proposition follows directly from the  definition of $L(Q_n)$.
\begin{Proposition}\label{prop}
Every vertex in $L_{n-1}^1\cup L_{n-1}^2$ is adjacent to two vertices in $F_n$. Conversely, every vertex in $F_n$ is adjacent to $n-1$ vertices in $L_{n-1}^1$ and $n-1$ vertices in $L_{n-1}^2$.
\end{Proposition}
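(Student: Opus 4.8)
The plan is to read off every adjacency in $L_n$ from the edge-sharing structure of the underlying network $Q_n=Q_{n-1}^1\oplus_f Q_{n-1}^2$, using two structural facts. First, each copy $Q_{n-1}^i$ is $(n-1)$-regular; this follows from the recursive definition by a one-line induction (the base graph $K_2$ is $1$-regular, and each recurrence step attaches exactly one $f$-edge to every vertex, raising its degree by one). Second, by construction each vertex of $Q_n$ lies on exactly one $f$-edge, and since the two copies $Q_{n-1}^1$ and $Q_{n-1}^2$ are vertex-disjoint and $f$ is a bijection, distinct vertices sit on distinct $f$-edges. Recall that two vertices of $L_n$ are adjacent precisely when their corresponding edges of $Q_n$ share an end-vertex, so the whole argument is a matter of counting the $f$-edges incident to a given edge, and conversely the non-$f$-edges incident to a given $f$-edge.

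For the first assertion, a vertex of $L_{n-1}^1\cup L_{n-1}^2$ corresponds to a non-$f$-edge $e=(u,v)$ lying inside one copy, say $Q_{n-1}^1$. The $f$-vertices adjacent to it are exactly the $f$-edges of $Q_n$ meeting $e$, that is, meeting $u$ or $v$. Each of $u$ and $v$ lies on exactly one $f$-edge, namely $(u,f(u))$ and $(v,f(v))$, and these two $f$-edges are distinct because $f$ is injective and $u\neq v$. Hence $e$ is incident with precisely two $f$-edges, giving the required two neighbours in $F_n$; the copy $Q_{n-1}^2$ is handled identically.

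For the converse, an $f$-vertex corresponds to an $f$-edge $(v,f(v))$ with $v\in V(Q_{n-1}^1)$ and $f(v)\in V(Q_{n-1}^2)$. Its neighbours in $L_{n-1}^1$ are the edges of $Q_{n-1}^1$ that meet $v$, of which there are $\deg_{Q_{n-1}^1}(v)=n-1$; symmetrically it has $\deg_{Q_{n-1}^2}(f(v))=n-1$ neighbours in $L_{n-1}^2$ coming from the edges at $f(v)$. It then remains only to confirm that no other $f$-vertex is a neighbour: a second $f$-edge $(w,f(w))$ sharing an end with $(v,f(v))$ would force $w=v$ or $f(w)=f(v)$ (giving the same edge), while $w=f(v)$ or $f(w)=v$ is impossible since $V(Q_{n-1}^1)$ and $V(Q_{n-1}^2)$ are disjoint. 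This yields exactly $n-1$ neighbours in each copy.

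There is no serious obstacle here: the proposition is essentially a bookkeeping consequence of the recursive construction. The only points that must be stated carefully are the $(n-1)$-regularity of each copy and the disjointness of the two copies together with the bijectivity of $f$, which jointly guarantee both that the two $f$-edges found in the first part are distinct and that no spurious $f$-to-$f$ adjacencies arise in the second part. As a consistency check, both counts sum to the degree $2(n-1)=2n-2$ promised by Lemma~\ref{lem 2.4.}.
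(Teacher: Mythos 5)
Your proof is correct and is essentially the argument the paper intends: the paper offers no written proof, stating only that the proposition ``follows directly from the definition of $L(Q_n)$,'' and your bookkeeping (each vertex of $Q_n$ lies on exactly one $f$-edge, each copy $Q_{n-1}^i$ is $(n-1)$-regular, and the two copies are vertex-disjoint) is precisely the routine verification left to the reader. The extra details you supply---that the two $f$-neighbours of a non-$f$-vertex are distinct, and that no $f$-vertex is adjacent to another $f$-vertex---are harmless strengthenings rather than deviations.
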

\begin{lem}\label{lem 3.2.}
For $n\geq 3, L_n\in{\cal L}_n$ and $S\subset E(L_n)$, if $|S|\leq 4n-7$, then  $L_n-S$ has a connected component with at least $n2^{n-1}-1$ vertices.
\end{lem}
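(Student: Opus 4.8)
The plan is to recast the statement as an edge-isoperimetric inequality on $L_n$ and then to exploit the clique structure of a line graph. Suppose for contradiction that every component of $L_n-S$ has at most $n2^{n-1}-2$ vertices. Since $|V(L_n)|=|E(Q_n)|=n2^{n-1}$, a largest component $C$ then satisfies $|C|\le n2^{n-1}-2$, and its complement has at least two vertices; moreover $|C|\ge 2$, because $|S|\le 4n-7$ is far too small to destroy every edge of the $(2n-2)$-regular graph $L_n$. Taking $A$ to be whichever of $C$ and $V(L_n)\setminus C$ is smaller, I obtain a union of components $A$ with $2\le |A|\le n2^{n-2}$ and $E(A,V(L_n)\setminus A)\subseteq S$. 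Hence it suffices to prove
$$|E(A,V(L_n)\setminus A)|\ge 4n-6\qquad\text{whenever } 2\le |A|\le |V(L_n)|-2,$$
since this forces $|S|\ge 4n-6>4n-7$, a contradiction.

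To prove the bound I identify $V(L_n)$ with $E(Q_n)$ and use the standard clique partition of a line graph: the $n$ edges of $Q_n$ incident with a vertex $v$ form a clique in $L_n$, these $2^n$ cliques partition $E(L_n)$, and each vertex of $L_n$ lies in exactly two of them. For $A\subseteq V(L_n)$ and $v\in V(Q_n)$, let $a_v$ denote the number of edges of $A$ incident with $v$, so $0\le a_v\le n$. Counting at each $v$ the pairs consisting of one edge of $A$ and one edge of its complement that meet at $v$ (each such pair is a crossing edge of $L_n$, counted once at its center) yields the clean identity
$$|E(A,V(L_n)\setminus A)|=\sum_{v\in V(Q_n)}a_v(n-a_v).$$
Thus I must show $\sum_v a_v(n-a_v)\ge 4n-6$. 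Only the \emph{boundary} vertices $T=\{v:0<a_v<n\}$ contribute, and each contributes at least $n-1$; so $|T|\ge 4$ already gives $\sum_v a_v(n-a_v)\ge 4(n-1)\ge 4n-6$.

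The crux is therefore to control the cases $|T|\le 3$, and this is exactly where the connectivity of $Q_n$ enters. Writing $W=\{v:a_v=n\}$ and $Z=\{v:a_v=0\}$, one checks that $Q_n$ has no edge joining $W$ to $Z$ (such an edge would be forced simultaneously into and out of $A$); consequently, if $W$ and $Z$ are both nonempty then $T$ is a vertex cut of $Q_n$, whence $|T|\ge\kappa(Q_n)=n$ by Lemma~\ref{lem 2.2.} and $\sum_v a_v(n-a_v)\ge |T|(n-1)\ge n(n-1)\ge 4n-6$ for $n\ge 3$. If instead $W=\emptyset$ (dually $Z=\emptyset$), then every edge of $A$ has both endpoints in $T$, so $A$ is confined to the edges spanned by $T$: here $|T|\le 2$ forces $|A|\le 1$, contradicting $|A|\ge 2$; $|T|\ge 4$ is handled by the crude bound; and $|T|=3$ leaves only two configurations, two adjacent edges (giving $2(n-2)+2(n-1)=4n-6$ exactly) or a triangle (giving $6n-12\ge 4n-6$). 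I expect this low-$|T|$ analysis to be the main obstacle: the inequality is tight, attained by two adjacent edges of $Q_n$, i.e.\ a single edge of $L_n$, so the hypothesis $\kappa(Q_n)=n$ must be used in full to exclude small separators, and the boundary configurations have to be evaluated exactly rather than merely estimated.
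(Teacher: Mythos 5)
Your proof is correct, and it takes a genuinely different route from the paper's. The paper proves Lemma \ref{lem 3.2.} by induction on $n$: it writes $L_n=L(Q^1_{n-1}\oplus_f Q^2_{n-1})$, splits $S$ among $E(L^1_{n-1})$, $E(L^2_{n-1})$ and the edges at the $f$-vertices $F_n$, and runs a case analysis on whether each $L^i_{n-1}-S_i$ stays connected, using Proposition \ref{prop} and checking the base case $n=3$ by inspection. You instead prove a single, stronger isoperimetric statement -- every edge cut of $L_n$ both of whose sides contain at least two vertices has size at least $4n-6$ -- via the clique partition of $E(L_n)$ induced by the vertices of $Q_n$, the exact identity $|E(A,\bar A)|=\sum_{v}a_v(n-a_v)$ (valid because $Q_n$ is simple, so two edges share at most one vertex), and a reduction of the only delicate cases ($|T|\le 3$) to $\kappa(Q_n)=n$ (Lemma \ref{lem 2.2.}). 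I checked the details: the split into $W,Z\neq\emptyset$ versus $W=\emptyset$ (dually $Z=\emptyset$, which is where the hypothesis $|\bar A|\ge 2$ is used) is exhaustive; $n(n-1)\ge 4n-6$ and $4(n-1)\ge 4n-6$ hold for $n\ge 3$; and on three boundary vertices with $W=\emptyset$ the set $A$ is forced to be two adjacent edges or a triangle, giving $4n-6$ and $6n-12$ respectively, so the conclusion $|S|\ge 4n-6>4n-7$ follows. (The triangle subcase is in fact vacuous, since hypercube-like networks are triangle-free by an easy induction, but including it costs nothing.) What your approach buys: it is non-inductive and uniform in $n\ge 3$, it isolates a clean and tight intermediate invariant -- in effect a lower bound $4n-6$ on the extra edge connectivity of $L_n$, attained exactly by a single edge of $L_n$, which also explains why the constant $4n-7$ in the lemma is sharp -- and it applies verbatim to the line graph of any $n$-regular, $n$-connected simple graph, not just hypercube-like networks. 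What the paper's approach buys: it uses only the recursive structure and the edge connectivity of the halves, and its case framework is recycled almost unchanged for the conditional version (Lemma \ref{lem 4.1.}); to reach that result by your method you would need the next isoperimetric threshold (cuts whose sides have at least three vertices have size at least $6n-12$), which requires a further, slightly heavier boundary analysis.
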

\begin{proof}
Assume $L_n=L(Q^1_{n-1}\oplus_fQ^2_{n-1})$. Let $E_1=E(L_{n-1}^1),E_2=E(L_{n-1}^2),\ E_f=E(F_n,L_{n-1}^1\cup L_{n-1}^2)$ and $S_1=S\cap E_1,S_2=S\cap E_2,S_f=S\cap E_f$. We can see that $E(L_n)=E_1\cup E_2\cup E_f$ and $S=S_1\cup S_2\cup S_f$ are partitions of $E(L_n)$ and $S$, respectively. Without loss of generality, we assume $|S_1|\leq |S_2|$ and, hence, $|S_1|\leq \lfloor|S|/2\rfloor=2n-4$.

We prove the lemma by induction on $n$. When $n=3$, the assertion follows by a direct observation. We now assume that the lemma holds for $n-1$ when $n\geq 4$.
	
{\bf Case 1.} $L_{n-1}^1-S_1$ is disconnected and $L_{n-1}^2-S_2$ is connected.
	
Since $n\geq 4,|S|\leq 4n-7$, we have $|S_1|\leq 2n-4\leq 4(n-1)-7$. So by the induction hypothesis, $L_{n-1}^1-S_1$ has a connected component $H_1$ with at least $(n-1)2^{n-2}-1$ vertices. On the other hand, by Lemma \ref{lem 2.4.}, $\lambda(L_{n-1}^1)=2(n-1)-2=2n-4$. Therefore, $|S_1|\geq 2n-4$, meaning that $|S_1|=2n-4$. Further, since $|S_2|\geq |S_1|$, we have $|S_2|\geq 2n-4$ and, thus,
	\begin{equation}\label{1}
		|S_f|=|S|-|S_1|-|S_2|\leq 4n-7-2(2n-4)=1.
	\end{equation}
We notice that $L_{n-1}^1-H_1$ has only one vertex. So by Proposition \ref{prop}, for any vertex $v$ in $L_{n-1}^2$,  $L_n$ has two disjoint paths $P_1=vw_1u_1$ and $P_2=vw_2u_2$, where $w_1,w_2\in F_n,w_1\not=w_2$ and $u_1,u_2\in V(H_1)$. Therefore, (\ref{1}) means that  at least one of $P_1$ and $P_2$ is a path in $L_n-S$ and, hence, $v$ is connected to a vertex in $H_1$. Consequently, $L_n-S-v_0$ is connected, where $v_0$ is the only vertex in $L_{n-1}^1-H_1$.
	
{\bf Case 2.} $L_{n-1}^1-S_1$ and $L_{n-1}^2-S_2$ are both connected.
	
Recall that every vertex of $L_n$ has degree $2(n-1)$. On the other hand, since $|S|\leq 4n-7<2\times 2(n-1)$, $L_n-S$ has at most one isolated vertex (a vertex not adjacent to any other vertex). This means that, except the possible isolated vertex $v_0$, every vertex in $F_n$ is not an isolated vertex and, hence, adjacent to either a vertex in $L^1_{n-1}$ or a vertex in $L_{n-1}^2$. Moreover, at least one vertex in $F_n\setminus\{v_0\}$ is adjacent to both a vertex in $L^1_{n-1}$ and a vertex in $L_{n-1}^2$. For otherwise, every vertex in $F_n$ would be incident with at least $n-1$ edges in $S$ by Proposition \ref{prop} and, hence, $|S|\geq (n-1)|F_n-1|>4n-7$ as $n\geq 4$ and $|F_n|=2^{n-1}\geq 8$, a contradiction. This means that $L_n-S-v_0$ is connected, which has exactly $n2^{n-1}-1$ vertices.
	
{\bf Case 3.} $L_{n-1}^1-S_1$ is connected and $L_{n-1}^2-S_2$ is disconnected.
	
Since $\lambda(L_{n-1}^2)=2n-4$, we have $|S_2|\geq2n-4$.
	
{\bf Case 3.1.} $2n-4\leq|S_2|\leq4(n-1)-7$.
	
By the induction hypothesis, $L_{n-1}^2-S_2$ has a connected component $H_2$ with at least $(n-1)2^{n-2}-1$ vertices. Further, since $L_{n-1}^2-S_2$ is disconnected, $L_{n-1}^2-S_2$ has exactly two components, one of which is $H_2$ with $(n-1)2^{n-2}-1$ vertices and the other is an isolated vertex, say $v_0$.
	
Recalling that $|S_2|\geq 2n-4$, it follows that $|S_f|\leq|S|-|S_2|\leq4n-7-(2n-4)=2n-3<2(n-1)$. So by Proposition \ref{prop}, except possible one vertex, say $u_0\in F_n$, every vertex in $F_n$ other than $u_0$ is adjacent to both a vertex in $L_{n-1}^1$ and a vertex in $L_{n-1}^2$. Further, at least one vertex in $F_n\setminus\{u_0\}$ is adjacent to a vertex in $H_2$. For otherwise, we would have $|S|\geq (n-2)(|F_n|-1)>4n-7$ because $n\geq 4,|F_n|=2^{n-1}\geq 8$ and every vertex in $F_n$ is adjacent to $n-1$ vertices in $L_{n-1}^2$ and hence adjacent to at least $n-2$ vertices in $H_2$. This is a contradiction. As a result, the subgraph of $L_n-S$ induced by $V(L_{n-1}^1)\cup V(H_2)\cup (F_n\setminus\{u_0\})$ is connected. Therefore, if $v_0$ is adjacent to a vertex in $F_n\setminus\{u_0\}$, then we obtain a connected component with vertex set $V(L_n)\setminus\{u_0\}$.

We now assume that $v_0$ is not adjacent to any vertex in $F_n\setminus\{u_0\}$. In this case, $v_0$ is adjacent only to $u_0$ or is an isolated vertex in $L_n-S$. This implies that the number of edges in $S$ incident with $v_0$ is at least $2(n-1)-1=2n-3$ and, conversely, $S$ has at most $4n-7-(2n-3)=2n-4$ edges that are not incident with $v_0$. Notice that $u_0$ has degree $2(n-1)$ in $L_n$, meaning that $u_0$ has degree at least $2(n-1)-(2n-4)=2$ in $L_n-S$. Therefore, $u_0$ is adjacent to at least one vertex in $L_{n-1}^1\cup H_2$. Hence, the subgraph induced by $V(L_n)\setminus\{v_0\}$ is connected, which has $n2^{n-1}-1$ vertices, as desired.

{\bf Case 3.2.} $4n-10\leq|S_2|\leq4n-7$.
	
In this case we have $|S_f|\leq|S|-|S_2|\leq3$. So  by Proposition \ref{prop}, except the particular case when $n=4$ and $S_f=(u_0,L_{n-1}^1)$ for some $u_0\in F_n$, every vertex in $F_n$ is adjacent to both a vertex in $L_{n-1}^1$ and a vertex in $L_{n-1}^2$. Therefore, $L_n-S$ is connected as $L_{n-1}^1$ is connected. For the particular case, it is clear that every vertex in $F_n\setminus\{u_0\}$ is adjacent to both a vertex in $L_{n-1}^1$ and a vertex in $L_{n-1}^2$, meaning that $L_n-S-u_0$ is connected.  \end{proof}

\begin{thm}\label{thm 3.3}
For any $L_n\in{\cal L}_n$, $L_n$ is $(2n-4)$-edge-fault-tolerant strongly Menger edge connected. Further, if $k>2n-4$, then $L_n$ is not $k$-edge-fault-tolerant strongly Menger edge connected.
\end{thm}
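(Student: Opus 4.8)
The plan is to recast strong Menger edge connectivity as a min-cut condition and then feed it into the structural Lemma~\ref{lem 3.2.}. Working in $G=L_n-F$, the property fails precisely when there are distinct vertices $u,v$ and an $(u,v)$-edge cut $C$ of $G$ with $|C|<\min\{\deg_G(u),\deg_G(v)\}$ (this is the edge version of Menger's theorem, which needs no adjacency hypothesis). Letting $X$ be the set of vertices reachable from $u$ in $G-C$ and $Y=V(L_n)\setminus X\ni v$, we may take $C=E_G(X,Y)$. The bridge to $L_n$ is the simple remark that every edge of $L_n$ between $X$ and $Y$ is either still present in $G$ (hence lies in $C$) or was deleted (hence lies in $F$); thus $S:=E_{L_n}(X,Y)$ satisfies $S\subseteq C\cup F$ and $|S|\le|C|+|F|$.

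For the first assertion I would argue by contradiction, assuming $|F|\le 2n-4$ and that such $u,v,C$ exist. Since $\deg_G(u)\le\deg_{L_n}(u)=2n-2$ by Lemma~\ref{lem 2.4.}, we get $|C|\le 2n-3$, and therefore $|S|\le(2n-3)+(2n-4)=4n-7$. This is exactly the hypothesis of Lemma~\ref{lem 3.2.} (valid for $n\ge 3$), so $L_n-S$ has a component with at least $n2^{n-1}-1$ vertices. Because $X$ and $Y$ are unions of components of $L_n-S$, this giant component sits inside one side and forces the other side to contain at most one vertex. Taking $X$ to be the smaller side (permissible since $u$ and $v$ play symmetric roles) we conclude $X=\{u\}$, so $C$ is the whole set of edges at $u$ in $G$ and $|C|=\deg_G(u)\ge\min\{\deg_G(u),\deg_G(v)\}$ --- contradicting $|C|<\min\{\deg_G(u),\deg_G(v)\}$. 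Hence $L_n-F$ is strongly Menger edge connected.

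For the sharpness I would construct one fault set of size $2n-3\le k$ that destroys the property. Fix a vertex $z$ of $Q_n$ and two $Q_n$-edges $e_1,e_2$ incident with $z$; as vertices of $L_n$ they are adjacent and each has degree $2n-2$. Let $F$ consist of all edges of $L_n$ incident with $e_2$ other than the edge $e_1e_2$, so that $|F|=2n-3$; then in $L_n-F$ the vertex $e_2$ has degree $1$ (its sole neighbour is $e_1$), while $\deg_{L_n-F}(e_1)=2n-2$ because no edge of $F$ meets $e_1$. Pick any $y$ not adjacent to $e_2$ --- such vertices are plentiful since $|V(L_n)|=n2^{n-1}$ --- so that $\deg_{L_n-F}(y)=2n-2$. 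The $2n-3$ edges joining $e_1$ to $V(L_n)\setminus\{e_1,e_2\}$ isolate $\{e_1,e_2\}$ and thus form an $(e_1,y)$-edge cut of size $2n-3<2n-2=\min\{\deg_{L_n-F}(e_1),\deg_{L_n-F}(y)\}$. Hence $L_n-F$ is not strongly Menger edge connected, so $L_n$ is not $k$-edge-fault-tolerant for any $k>2n-4$.

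The genuinely difficult content is already packaged in Lemma~\ref{lem 3.2.}; granting it, the forward direction is mostly bookkeeping. The one point I would watch carefully is the arithmetic $|C|+|F|\le(2n-3)+(2n-4)=4n-7$: it is no accident that the edge-tolerance $2n-4$ and the component threshold $4n-7$ are matched, and this matching is exactly what lets Lemma~\ref{lem 3.2.} close the argument. For the tightness the only things to verify are the degree count at $e_2$ and the existence of a non-neighbour $y$, both routine given that $L_n$ is $(2n-2)$-regular on $n2^{n-1}$ vertices.
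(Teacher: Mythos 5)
Your proposal is correct and takes essentially the same route as the paper's proof: you merge the fault set with a hypothetical too-small cut into a set $S$ with $|S|\le (2n-3)+(2n-4)=4n-7$, invoke Lemma \ref{lem 3.2.} to force the separated side to be a single vertex, and finish with the degree count, exactly as the paper does with $S=T\cup F$. Your sharpness construction (deleting all but one edge at a vertex so that its remaining neighbour and a distant vertex of full degree admit only a $(2n-3)$-edge cut between them) is the same example the paper gives via Figure 3, just written in terms of $Q_n$-edges.
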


\begin{proof} If $n$ is less than 3, then the theorem trivially holds. We now assume that $n\geq 3$. Let $T\subset E(L_n)$ with $|T|\leq 2n-4$. It suffices to prove that $L_n-T$ has $\min\{\deg_{L_n-T}(u), \deg_{L_n-T}(v)\}$ edge-disjoint paths connecting any two vertices $u,v\in V(L_n)$.

Since $\lambda(L_n)=2n-2$, $L_n-T$ is connected. Let $u$ and $v$ be any two different vertices of $L_n$. Without loss of generality, we assume that deg$_{L_n-T}(u)=\min\{{\rm deg}_{L_n-T}(u), {\rm deg}_{L_n-T}(v)\}$. By Theorem \ref{thm 1.1}, we need to show that the minimum size of a $(u,v)$-edge cut is deg$_{L_n-T}(u)$. Suppose, to the contrary, that $u$ and $v$ are disconnected by deleting a set $F$ of edges with $|F|\leq {\rm deg}_{L_n-T}(u)-1$. Since deg$_{L_n-T}(u)\leq {\rm deg}_{L_n}(u)=2n-2$, we have $|F|\leq 2n-3$.
	
Let $S=T\cup F$. Then $|S|\leq 4n-7$. So by Lemma \ref{lem 3.2.}, $L_n-S$ has a connected component $H$ such that $|V(H)|\geq n2^{n-1}-1$. Since $u$ is disconnected from $v$, we have $|V(H)|= n2^{n-1}-1$ and $|V(L_n)\setminus V(H)|=1$. Therefore, $u\in V(H)$ and $V(L_n)\setminus V(H)=\{v\}$, or $v\in V(H)$ and $V(L_n)\setminus V(H)=\{u\}$. If  $v\in V(H)$ and $V(L_n)\setminus V(H)=\{u\}$, then $E(u,N_{L_n-T}(u))\subseteq F$. Hence, $|F|\geq \deg_{L_n-T}(u)$, which contradicts $|F|\leq \deg_{L_n-T}(u)-1$. Similarly, if  $u\in V(H)$ and $V(L_n)\setminus V(H)=\{v\}$, then $E(v,N_{L_n-T}(v))\subseteq F$. Hence, $|F|\geq \deg_{L_n-T}(v)$, which is again a contradiction to $|F|\leq \deg_{L_n-T}(u)-1$.
	
Therefore, $L_n$ is $(2n-4)$-edge-fault-tolerant strongly Menger edge connected.

Finally, choose arbitrary two adjacent vertices $u_0$ and $u$ in $V(L_n)$. Let $v$ be a vertex in $V(L_n)\setminus(N_{L_n}(u_0)\cup\{u_0\})$ and let $S=E(u_0, N_{L_n}(u_0)\setminus\{u\})$, see Figure 3. Hence $|S|=2n-3$. By a direct observation, $L_n-S$ has no more than $(2n-3)$ edge-disjoint paths connecting $u$ and $v$. Notice that $\deg_{L_n}(u)=\deg_{L_n}(v)=2n-2$. This means that $L_n$ is not $(2n-3)$-edge-fault-tolerant strongly Menger edge connected. \end{proof}

\begin{figure}[htbp]
	\centering
	{
		\begin{minipage}[t]{0.9\textwidth}
			\centering
			\includegraphics[width=10cm]{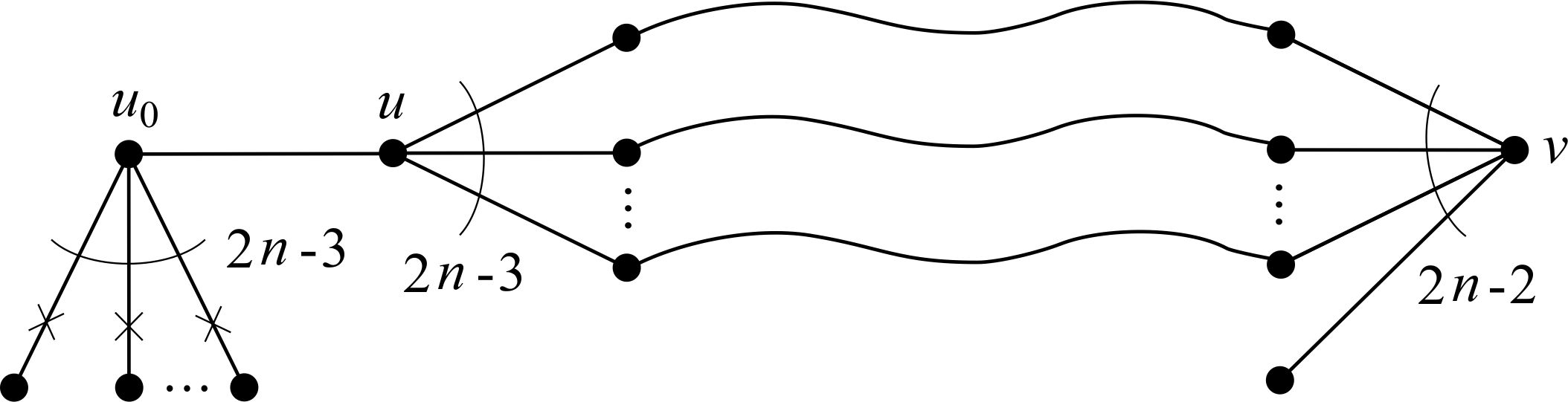}
			\caption{$L_n-S$ has no more than $(2n-3)$ edge-disjoint paths connecting $u$ and $v$.}
		\end{minipage}%
	}%
\end{figure}

\section{Conditional edge fault-tolerant strong Menger edge connectivity of line graph of hypercube-like networks}

\begin{lem}\label{lem 4.1.}
For $n\geq 4$, $L_n\in{\cal L}_n$ and $S\subset E(L_n)$, if $|S|\leq 6n-13$, then $L_n-S$ has a connected component with at least $n2^{n-1}-2$ vertices.
\end{lem}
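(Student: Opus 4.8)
The plan is to mirror the inductive scheme of Lemma~\ref{lem 3.2.}, exploiting the extra slack that comes from the weaker target: here $L_n$ has $n2^{n-1}$ vertices, so I must leave at most two of them outside one component instead of one. Write $L_n=L(Q^1_{n-1}\oplus_fQ^2_{n-1})$ and split $S=S_1\cup S_2\cup S_f$ along $E_1=E(L_{n-1}^1)$, $E_2=E(L_{n-1}^2)$ and $E_f=E(F_n,L_{n-1}^1\cup L_{n-1}^2)$, with $|S_1|\le|S_2|$ so that $|S_1|\le\lfloor(6n-13)/2\rfloor=3n-7$. Since $3n-7\le 4n-11=4(n-1)-7$ for $n\ge 4$, Lemma~\ref{lem 3.2.} is always available on the lighter side $S_1$. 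I induct on $n$, the base case $n=4$ being settled by the same local analysis below (with the middle range of $|S_2|$ in Case~3 vacuous), so that the induction hypothesis, Lemma~\ref{lem 4.1.} at dimension $n-1$, is invoked only for $n\ge 5$. Two devices recur. First, a \emph{bridging} principle: by Proposition~\ref{prop} an $f$-vertex touches only the two sides, so once a side $L_{n-1}^i-S_i$ is connected an $f$-vertex joins all of it unless all $n-1$ of its edges to it lie in $S$; severing \emph{every} $f$-vertex from the surviving large sides would force $|S_f|\ge 2^{n-1}(n-3)>6n-13$ for $n\ge5$ (and the larger $2^{n-1}(n-2)>6n-13$ when $n=4$), so the surviving sides are always bridged into a single component $B$. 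Second, a \emph{boundary} estimate: if $\bar B=V(L_n)\setminus B$ is a union of components of $L_n-S$ with $t:=|\bar B|$ vertices, all of $E_{L_n}(\bar B,B)$ lies in $S$, whence $|S|\ge(2n-2)t-2e_{L_n}(\bar B)\ge t(2n-1-t)$; at $t=3$ this reads $|S|\ge 6n-12>6n-13$, a contradiction. Thus it will suffice to reach $|\bar B|\le 3$ in every case, and the estimate sharpens this to $|\bar B|\le 2$.

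The two configurations where the heavier side stays connected are routine. In Case~2 (both $L_{n-1}^1-S_1$ and $L_{n-1}^2-S_2$ connected) the only possible lost vertices are $f$-vertices severed from both sides, each costing $2n-2$ edges; since $6n-13<3(2n-2)$ there are at most two, and bridging absorbs everything else, so $|\bar B|\le 2$. In Case~1 ($L_{n-1}^1-S_1$ disconnected, $L_{n-1}^2-S_2$ connected) disconnection forces $|S_1|\ge\lambda(L_{n-1}^1)=2n-4$, hence $|S_2|\ge 2n-4$ and $|S_f|\le 2n-5$; Lemma~\ref{lem 3.2.} leaves exactly one vertex $v_0$ outside the large part $H_1$ of $L_{n-1}^1-S_1$, and a doubly-severed $f$-vertex would now cost at least $(n-1)+(n-2)=2n-3>|S_f|$, so none is lost and $\bar B\subseteq\{v_0\}$.

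Case~3 ($L_{n-1}^1-S_1$ connected, $L_{n-1}^2-S_2$ disconnected, $|S_2|\ge 2n-4$) is the crux and splits by the size of $|S_2|$. When $2n-4\le|S_2|\le 4n-11$, Lemma~\ref{lem 3.2.} gives a component $H_2$ missing a single vertex $v_0$, while $|S_f|\le 4n-9$ admits at most one doubly-severed $f$-vertex, so $\bar B$ sits inside $\{v_0\}\cup\{\text{one }f\text{-vertex}\}$ and $|\bar B|\le 2$. When $4n-10\le|S_2|\le 6n-19$ (so $n\ge 5$) the induction hypothesis yields $H_2$ missing up to two vertices $v_0,v_1$; here $|S_f|\le 2n-3$ still permits at most one doubly-severed $f$-vertex (each costs at least $(n-1)+(n-3)=2n-4$), giving only the a~priori bound $|\bar B|\le 3$, after which the boundary estimate removes the third. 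Finally, when $6n-18\le|S_2|\le 6n-13$ neither recursive lemma applies to $S_2$, but then $|S_1|+|S_f|\le 5$: writing $\bar B$ as $p$ $f$-vertices and $q$ vertices of $L_{n-1}^2$, each such $f$-vertex forces its $n-1$ edges to the connected $L_{n-1}^1$ into $S_f$ and each such $L^2$-vertex forces at least one surviving $f$-edge into $S_f$, so $p(n-1)+q\le|S_f|\le 5$ (with $p\le1$ since $2(n-1)>5$); this bounds $|\bar B|=p+q\le 3$, and the boundary estimate again gives $|\bar B|\le 2$.

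The main obstacle is exactly this Case~3. When the heavier side is shattered I cannot afford the naive bookkeeping—two vertices inherited from $H_2$ plus one stray $f$-vertex would already give three omitted vertices—and, worse, the top range $6n-18\le|S_2|\le 6n-13$ escapes both Lemma~\ref{lem 3.2.} and the induction hypothesis entirely. The resolution rests on the two genuinely new ingredients: the global edge-boundary inequality, which shaves off the third vertex whenever a case leaves $|\bar B|=3$, and the direct $S_f$-count that controls the shattered side when $|S_2|$ is too large for any recursive bound. Everything else is the routine propagation of connectivity through the $f$-vertices supplied by Proposition~\ref{prop}.
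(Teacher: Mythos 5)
Your inductive scheme, the decomposition $S=S_1\cup S_2\cup S_f$, and your two devices are sound where they are applied: the bridging counts via Proposition \ref{prop} check out, and the edge-boundary inequality $|S|\geq (2n-2)t-2e_{L_n}(\bar B)\geq t(2n-1-t)$ (which at $t=3$ gives $6n-12>6n-13$) is correct and is actually a cleaner device than the ad hoc degree counting the paper uses to dispatch its hardest sub-cases. But your case analysis is not exhaustive, and that is a genuine gap: you treat (connected, connected), (disconnected, connected) and (connected, disconnected), and never the configuration in which \emph{both} $L_{n-1}^1-S_1$ and $L_{n-1}^2-S_2$ are disconnected. This case is not vacuous under your normalization $|S_1|\leq|S_2|$: isolating one vertex in each half costs $|S_1|=|S_2|=2n-4$, i.e.\ $4n-8\leq 6n-13$ edges in total for $n\geq 4$, well within budget. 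The paper devotes a separate case (its Case 4) to exactly this situation, both in the main induction and in the $n=4$ base case of Appendix A, so your base-case claim (``the same local analysis below'') inherits the same omission.

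The omission reads as an oversight rather than a wrong approach, because your own tools close it: with both sides disconnected, $2n-4\leq|S_1|\leq 3n-7\leq 4(n-1)-7$ lets Lemma \ref{lem 3.2.} produce $H_1$ missing exactly one vertex; $|S_2|\leq|S|-|S_1|\leq 4n-9\leq 6(n-1)-13$ (valid for $n\geq5$) lets the induction hypothesis produce $H_2$ missing at most two; $|S_f|\leq 6n-13-2(2n-4)=2n-5$ allows at most one $f$-vertex severed from both $H_1$ and $H_2$ (each such vertex costs at least $(n-2)+(n-3)=2n-5$ edges), so $|\bar B|\leq 4$, and your boundary estimate excludes $3\leq t\leq 4$. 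Still, as submitted the proof is incomplete. A secondary, minor slip: in the top range $6n-18\leq|S_2|\leq 6n-13$, your inequality $p(n-1)+q\leq|S_f|\leq 5$ does \emph{not} bound $|\bar B|$ by $3$ when $p=0$; in that situation each cut-off $L_{n-1}^2$-vertex loses both of its $f$-edges, so the correct count is $2q\leq|S_f|$, giving $q\leq 2$ directly. Your conclusion survives anyway because the boundary estimate excludes every $t$ with $3\leq t\leq 5$, but the inference as written is not valid.
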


\begin{proof} Assume $L_n=L(Q^1_{n-1}\oplus_fQ^2_{n-1})$. Let $E_1=E(L_{n-1}^1),E_2=E(L_{n-1}^2),E_f=E(F_n,L_{n-1}^1\cup L_{n-1}^2)$ and $S_1=S\cap E_1,S_2=S\cap E_2,S_f=S\cap E_f$. It is clear that $E(L_n)=E_1\cup E_2\cup E_f$ and $S=S_1\cup S_2\cup S_f$ are partitions of $E(L_n)$ and $S$, respectively. Without loss of generality, we assume $|S_1|\leq |S_2|$ and, hence, $|S_1| \leq \lfloor|S|/2\rfloor=3n-7$.

 We prove the lemma by induction on $n$. When $n=4$, the conclusion is true and the proof is shown in Appendix A. Assume now $n\geq 5$.
	
{\bf Case 1.} $L_{n-1}^1-S_1$ and $L_{n-1}^2-S_2$ are both connected.

Since $|S|\leq 6n-13<3\times2(n-1)$ and every vertex of $L_n$ has degree $2(n-1)$, $L_n-S$ has at most two isolated vertices. This means that, except two possible isolated vertices, say $u_0,u_1\in F_n$, every vertex in $F_n$ is adjacent to either a vertex in $L_{n-1}^1$ or a vertex in $L_{n-1}^2$. Further, at least one vertex in $F_n\setminus \{u_0, u_1\}$ is adjacent to both a vertex in $L_{n-1}^1$ and a vertex in $L_{n-1}^2$. For otherwise, every vertex in $F_n$ would be incident with at least $n-1$ edges in $S$  and hence, by Proposition \ref{prop}, $|S|\geq (n-1)(|F_n|-2)>6n-13$ as $n\geq 5$ and $|F_n|=2^{n-1}\geq 16$, a contradiction. Therefore, the subgraph induced by $V(L_n)\setminus \{u_0,u_1\}$ is connected, which has $n2^{n-1}-2$ vertices.

{\bf Case 2.} $L_{n-1}^1-S_1$ is disconnected and $L_{n-1}^2-S_2$ is connected.
	
Since $n\geq5$ and $|S_1|\leq 3n-7<6(n-1)-13$, $L_{n-1}^1-S_1$ has a connected component $H_1$ with at least $(n-1)2^{n-2}-2$ vertices by the induction hypothesis. On the other hand, by Lemma \ref{lem 2.4.}, $ \lambda(L_{n-1}^1)= 2n-4$, and hence $ 2n-4\leq |S_1|\leq 3n-7<2(2n-4)-2=4n-10$. This means that $ L_{n-1}^1-H_1$ has only one vertex and $H_1$ has exactly $(n-1)2^{n-2}-1$ vertices. Further, since $|S_2|\geq |S_1|$, we have $|S_2|\geq 2n-4$ and thus, $|S_f|\leq |S|-|S_1|-|S_2|\leq6n-13-2(2n-4)=2n-5<2(n-1)$. So by Proposition \ref{prop}, except possible one vertex, say $ u_0\in F_n $, every vertex in $F_n$ other than $u_0$ is adjacent to both a vertex in $ L_{n-1}^1 $ and a vertex in $ L_{n-1}^2 $. Further, at least one vertex in $F_n\setminus \{u_0\}$ is adjacent to a vertex in $H_1$. For otherwise, we would have $|S|\geq (n-2)(|F_n|-1)>6n-13$ as $n\geq 5$, $|F_n|=2^{n-1}\geq 16$ and every vertex in $F_n$ is adjacent to $n-1$ vertices in $L_{n-1}^1$ and hence adjacent to at least $n-2$ vertices in $H_1$. This is a contradiction. Therefore, the subgraph induced by $V(H_1)\cup V(L_{n-1}^2)\cup (F_n\setminus \{u_0\})$ is connected, which has $n2^{n-1}-2$ vertices.

{\bf Case 3.} $L_{n-1}^1-S_1$ is connected and $L_{n-1}^2-S_2$ is disconnected.
	
Since $ \lambda (L_{n-1}^2)=2n-4 $, we have $ |S_2|\geq 2n-4 $.
	
{\bf Case 3.1.} $2n-4\leq |S_2|\leq 6(n-1)-13=6n-19$.
	
By the induction hypothesis, $L_{n-1}^2-S_2$ has a connected component $H_2$ with at least $(n-1)2^{n-2}-2$ vertices and, hence
$V(L_{n-1}^2- H_2)$ has at most two vertices, say $v_0$ and $v_1$. Recalling that $|S_2|\geq 2n-4$, it follows that $|S_f|\leq |S|-|S_2|\leq 6n-13-(2n-4)=4n-9<2\times 2(n-1)$. This implies that, except possibly two vertices $u_0,u_1\in F_n$, every vertex in $F_n$ is adjacent to both a vertex in $L_{n-1}^1$ and a vertex in $L_{n-1}^2$. Further, at least one vertex in $F_n\setminus\{u_0,u_1\}$ is adjacent to a vertex in $H_2$. For otherwise, we would have $|S|\geq (n-3)(|F_n|-2)>6n-13$ as $n\geq 5$, $|F_n|=2^{n-1}\geq 16$ and every vertex in $F_n$ is adjacent to $n-1$ vertices in $L_{n-1}^2$ and hence adjacent to at least $n-3$ vertices in $H_2$. This is a contradiction. As a result, the subgraph of $L_n-S$ induced by $V(L_{n-1}^1)\cup V(H_2)\cup (F_n\setminus \{u_0,u_1\})$ is connected.



{\bf Case 3.1.1.} Either $v_0$ or $v_1$, say $v_0$, is adjacent to a vertex in $F_n\setminus \{u_0,u_1\}$.

 In this case, we obtain a connected component with vertex set $V(L_{n-1}^1)\cup V(H_2+v_0)\cup (F_n\setminus \{u_0,u_1\})$. Recall that $u_0$ and $u_1$ have degree $2(n-1)$ in $L_n$ and $|S_f|\leq 4n-9<2\times 2(n-1)=4n-4$. So by Proposition \ref{prop}, at least one of $u_0$ and $u_1$, say $u_0$, is adjacent to a vertex in $L_{n-1}^1 \cup H_2$. Therefore, we obtain a connected component with vertex set $V(L_{n-1}^1)\cup V(H_2+v_0)\cup (F_n\setminus \{u_1\})=V(L_n)\setminus \{u_1,v_1\}$, which has $n2^{n-1}-2$ vertices.

{\bf Case 3.1.2.} Neither $v_0$ nor $v_1$ is adjacent to any vertex in $F_n\setminus \{u_0,u_1\}$.

In this case, the possible vertices adjacent to $v_0$ are $u_0,u_1$ and $v_1$, meaning that $v_0$ has degree at most 3 in $L_n-S$. Similarly,  $v_1$ has degree at most 3 in $L_n-S$. On the other hand, the total degree of $v_0$ and $v_1$ in $L_n$ is $2\times 2(n-1)$. Therefore,  the number of edges in $S$ incident with $v_0$ and $v_1$ is at least $2\times 2(n-1)-2-3=4n-9$ and, conversely, $S$ has at most $6n-13-(4n-9)=2n-4$ edges that are not incident with $v_0$ and $v_1$. Notices that $u_0$ and $u_1$ has degree $2\times 2(n-1)$ in $L_n$, meaning that $u_0$ and $u_1$ has degree at least $2\times 2(n-1)-(2n-4)=2n$ in $L_n-S$. By Proposition \ref{prop}, $u_0$ and $u_1$ are adjacent to vertices in $L_{n-1}^1\cup H_2$. Therefore, the subgraph induced by $V(L_{n-1}^1)\cup V(H_2)\cup F_n$ is connected which has $n2^{n-1}-2$ vertices.

{\bf Case 3.2.} $6n-18\leq|S_2|\leq 6n-13$
	
In this case, we have $ |S_f|\leq |S|-|S_2|\leq 5 $, so by Proposition \ref{prop}, except possible one vertex, say $u_0\in F_n$, every vertex in $F_n$ is adjacent to both a vertex in $L_{n-1}^1$ and a vertex in $L_{n-1}^2$. On the other hand, $u_0$ has degree $2(n-1)$ in $L_n$ and hence, $u_0$ is adjacent to at least one vertex in $L_{n-1}^1\cup H_2$ when $n\geq 5$. Therefore, we can obtain a connected component with vertex set $V(L_n)\setminus \{v_0,v_1\}$.
	
{\bf Case 4.} $L_{n-1}^1-S_1$ and $L_{n-1}^2-S_2$ are both disconnected.
	
Since $2n-4 \leq |S_1| \leq |S_2|$ and $|S_1|\leq 3n-7<2(2n-4)-2=4n-10$, so by Lemma \ref{lem 3.2.}, $L_{n-1}^1$ has a connected component $ H_1$ with $(n-1)2^{n-2}-1$ vertices. Notice that $|S_2| \leq |S| - |S_1| \leq 6n-13-(2n-4)=4n-9<6(n-1)-13$. By the induction hypothesis, $L_{n-1}^2-S_2$ has a connected component $ H_2 $ with at least $(n-1)2^{n-2}-2$ vertices. Thus, $|S_f|=|S|-|S_1|-|S_2|\leq 6n-13-2(2n-4)=2n-5<2(n-1)$. So by Proposition \ref{prop}, except possible one vertex, say $u_0\in F_n$, every vertex in $F_n$ is adjacent to both a vertex in $L_{n-1}^1$ and a vertex in $L_{n-1}^2$. We notice that $L_{n-1}^1-H_1$ has only one vertex $v_0$, and that $L_{n-1}^2-H_2$ has two vertices $w_0, w_1$.

We now assume that none of $v_0, w_0$ and $w_1$ is adjacent to any vertex in $F_n\setminus\{u_0\}$. In this case, $v_0,w_0$ and $w_1$ are adjacent only  to $u_0$ or are isolated vertices in $L_n-S$, respectively. This implies that the number of edges in $S$ incident with $v_0,w_0$ and $w_1$ is at least $3\times 2(n-1)-4=6n-10> 6n-13$. This is a contradiction. Therefore, at most two vertices in $\{v_0, w_0, w_1\}$, say $v_0$ and $w_0$, are not adjacent to vertices in $H_1 \cup H_2$. On the other hand, since $|S_f| \leq 2n-5<2(n-1)$, $u_0$ is adjacent to at least one vertices in $H_1 \cup H_2$  by Proposition \ref{prop}. Hence, the subgraph induced by $V(L_n)\setminus \{v_0,w_0\}$ is connected which has $n2^{n-1}-2$ vertices.
\end{proof}

\begin{thm}\label{thm 3.2.}
For any $L_n\in{\cal L}_n$, $L_n$ is $(4n-10)$-conditional edge-fault-tolerant strongly Menger edge connected for any $L_n\in{\cal L}_n$. Further, if $k>4n-10$, then $L_n$ is not $k$-conditional edge-fault-tolerant strongly Menger edge connected.
\end{thm}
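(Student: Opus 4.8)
The plan is to follow the template of the proof of Theorem~\ref{thm 3.3}, with Lemma~\ref{lem 3.2.} replaced by Lemma~\ref{lem 4.1.} and with the conditional hypothesis $\delta(L_n-T)\ge 2$ carrying the decisive step. Fix $n\ge 4$ and $T\subset E(L_n)$ with $|T|\le 4n-10$ and $\delta(L_n-T)\ge 2$. First I would verify that $L_n-T$ is connected: applying Lemma~\ref{lem 4.1.} with $S=T$ (note $4n-10\le 6n-13$) yields a component on at least $n2^{n-1}-2$ vertices, so at most two vertices lie outside it; but a component on one or two vertices contains a vertex of degree at most $1$, contradicting $\delta(L_n-T)\ge 2$. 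Hence $L_n-T$ is connected, and it remains to produce, for any two distinct $u,v$, exactly $\min\{\deg_{L_n-T}(u),\deg_{L_n-T}(v)\}$ edge-disjoint $(u,v)$-paths. Assuming without loss of generality $\deg_{L_n-T}(u)\le\deg_{L_n-T}(v)$, by Menger's Theorem (Theorem~\ref{thm 1.1}) it suffices to show that every $(u,v)$-edge cut of $L_n-T$ has at least $\deg_{L_n-T}(u)$ edges, the reverse inequality being obvious (delete the edges at $u$).

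Suppose to the contrary that some $(u,v)$-edge cut $F$ of $L_n-T$ has $|F|\le \deg_{L_n-T}(u)-1\le 2n-3$, and set $S=T\cup F$, so $|S|\le (4n-10)+(2n-3)=6n-13$. By Lemma~\ref{lem 4.1.}, $L_n-S$ has a component $H$ with $|V(H)|\ge n2^{n-1}-2$, hence $R=V(L_n)\setminus V(H)$ satisfies $|R|\le 2$. Since $u,v$ are separated in $L_n-S$, at least one of them, say $w\in\{u,v\}$, lies in a component $C\ne H$, and $C\subseteq R$. I would split on $|C|$. If $|C|=1$ then $w$ is isolated in $L_n-S$, so every edge of $L_n-T$ at $w$ lies in $F$, giving $|F|\ge \deg_{L_n-T}(w)\ge \deg_{L_n-T}(u)$, a contradiction. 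If $|C|=2$, write $C=\{w,x\}$; as $C$ is a single component the edge $wx$ survives and all edges of $L_n-T$ leaving $C$ lie in $F$, so $|F|\ge \deg_{L_n-T}(w)+\deg_{L_n-T}(x)-2$. Using $\deg_{L_n-T}(x)\ge\delta(L_n-T)\ge 2$ and $\deg_{L_n-T}(w)\ge \deg_{L_n-T}(u)$ gives $|F|\ge \deg_{L_n-T}(u)$, again a contradiction. This proves the positive assertion.

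The step I expect to be most delicate is the case $|C|=2$, since it is exactly where the bound $4n-10$ (and not something larger) is forced: the ``$-2$'' in the edge boundary of a pair is absorbed precisely by the two edges guaranteed by $\delta(L_n-T)\ge 2$, so the argument would collapse if one only had $\delta\ge 1$. There I would take care to note that $C$ cannot equal $\{u,v\}$ (otherwise $u,v$ lie in one component), so $x\notin\{u,v\}$ and the bound $\deg_{L_n-T}(x)\ge 2$ is legitimate, and that the counted boundary edges all belong to $F$ rather than $T$ because $T$ has already been deleted in forming $L_n-T$.

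For sharpness I would exhibit a faulty set of size $4n-9$ destroying strong Menger edge connectedness while keeping $\delta\ge 2$. Since $Q_n$ is triangle-free, every triangle of $L_n$ arises from three edges $w_0,w_1,w_2$ of $Q_n$ sharing a common vertex; these span a triangle $W$ in $L_n$ whose edge boundary has size $3(2n-2)-6=6n-12$. Take $v=w_0$, retain all $2n-4$ boundary edges at $w_0$, retain exactly one boundary edge at $w_1$, and let $T$ consist of the remaining $2n-5$ boundary edges of $w_1$ together with all $2n-4$ boundary edges of $w_2$, so $|T|=(2n-5)+(2n-4)=4n-9$. Then $\deg_{L_n-T}(v)=2n-2$, and since each outside vertex loses at most one edge to $w_1$ and one to $w_2$ we have $\delta(L_n-T)\ge 2n-4\ge 2$. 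In $L_n-T$ the boundary of $W$ now has only $(2n-4)+1+0=2n-3$ edges, so for any full-degree vertex $u$ lying far from $W$ every $(u,v)$-path must cross one of these $2n-3$ edges; hence there are at most $2n-3<2n-2=\min\{\deg_{L_n-T}(u),\deg_{L_n-T}(v)\}$ edge-disjoint $(u,v)$-paths. Therefore $L_n$ is not $(4n-9)$-conditional edge-fault-tolerant strongly Menger edge connected, and a fortiori not $k$-conditional for any $k>4n-10$.
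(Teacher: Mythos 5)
Your proof is correct and is essentially the paper's own argument: the positive direction applies Lemma~\ref{lem 4.1.} to $S=T\cup F$ exactly as the paper does (your case split on the size of the small component $C$ is just a reorganization of the paper's split on whether $|V(H)|$ is $n2^{n-1}-1$ or $n2^{n-1}-2$, with $\delta(L_n-T)\ge 2$ supplying the one extra boundary edge in the two-vertex case), and your sharpness example coincides with the paper's Figure~4 construction, merely described via a star $K_{1,3}$ at a vertex of $Q_n$ rather than directly as a triangle of $L_n$ with its incident edges. One slip to correct in the sharpness part: the asserted bound $\delta(L_n-T)\ge 2n-4$ is false, since $w_1$ and $w_2$ themselves have degrees $3$ and $2$ in $L_n-T$; the inequality actually needed, $\delta(L_n-T)\ge 2$, does hold, but it must be verified on $w_0,w_1,w_2$ as well as on the outside vertices, whereas your stated justification only accounts for the latter.
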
	

\begin{proof}
If $n$ is less than $4$, then the theorem trivially holds. We now assume that $n\geq 4$. It is suffices to prove that $L_n-T$ has min$\{{\rm deg}_{L_n-T}(u),{\rm deg}_{L_n-T}(v)\}$ edge-disjoint paths connecting any two vertices $u,v\in V(L_n)$.

Let $T\subset E(L_n)$ with $|T|\leq 4n-10$ and $\delta(L_n-T)\geq 2$. Since  $|T|\leq 4n-10<4n-7$, so by Lemma \ref{lem 3.2.}, if $L_n-T$ is disconnected then  $L_n-T$ contains an isolated vertex, contradicting the assumption $\delta(L_n-T)\geq 2$. Hence, $L_n-T$ is connected. Let $u$ and $v$ be any two different vertices of $L_n$. Without loss of generality, we assume that ${\rm deg}_{L_n-T}(u)=\min\{{\rm deg}_{L_n-T}(u), {\rm deg}_{L_n-T}(v)\}$. By Theorem \ref{thm 1.1}, we need to show that the minimum size of a $(u,v)$-edge cut is $\deg_{L_n-T}(u)$. Suppose, to the contrary, that $u$ and $v$ are disconnected by deleting a set $F$ of edges with $|F|\leq {\rm deg}_{L_n-T}(u)-1$. Since ${\rm deg}_{L_n-T}(u)\leq {\rm deg}_{L_n}(u)=2n-2$. We have $|F|\leq 2n-3$.
	
Let $S=T\cup F$. Then $|S|\leq 6n-13$. By Lemma \ref{lem 4.1.}, $L_n-S$ has a connected component $H$ such that $|V(H)|\geq n2^{n-1}-2$. Since $u$ is disconnected to $v$. Without loss of generality, we assume that $u\in V(L_n)\setminus V(H)$. We consider two cases.
	
{\bf Case 1.} $|V(H)|=n2^{n-1}-1$.

In this case, $u$ is an isolated vertex in $L_n-S$. Hence, $|F|\geq |E(u,V(H))|={\rm deg}_{L_n-T}(u)$. This contradicts to $|F|\leq {\rm deg}_{L_n-T}(u)-1$.
	
{\bf Case 2.} $|V(H)|=n2^{n-1}-2$.
	
Let $w$ be a vertex in $V(L_n)\setminus V(H)$ other than $u$. Since $\delta(L_n-T)\geq 2$, we have $|E(w,V(H)) \cap F|\geq 1$. Therefore, $|F|\geq |E(\{u,w\},V(H))|\geq {\rm deg}_{L_n-T}(u)-1+1={\rm deg}_{L_n-T}(u)$. This is a contradiction.
	
Therefore, $L_n$ is $(4n-10)$-conditional edge-fault-tolerant strongly Menger edge connected.
	
Finally, choose arbitrary three adjacent vertices $u,u_1,u_2$ in $L_n$ and a vertex $u_3$ in $N_{L_n}(u_2)\setminus\{u,u_1\}$. Let $S=E[u_2, \{V(L_n)\setminus \{u,u_1,u_3\}\}]\cup E[u_1, \{V(L_n)\setminus \{u,u_2\}\}]$. Hence $|S|=4n-9$. Let $v$ be a vertex in $V(L_n)\setminus N_{L_n}(\{u,u_1,u_2\})$. By observation (see Figure 4), $L_n-S$ has no more than $(2n-3)$-edge-disjoint paths connecting $u$ and $v$. Notice that $\deg_{L_n}(u)=\deg_{L_n}(v)=2n-2$. This means that $L_n$ is not $(4n-9)$-conditional edge-fault-tolerant strongly Menger edge connected.	
\end{proof}

\begin{figure}[htbp]
	\centering
	{
		\begin{minipage}[t]{0.9\textwidth}
			\centering
			\includegraphics[width=8cm]{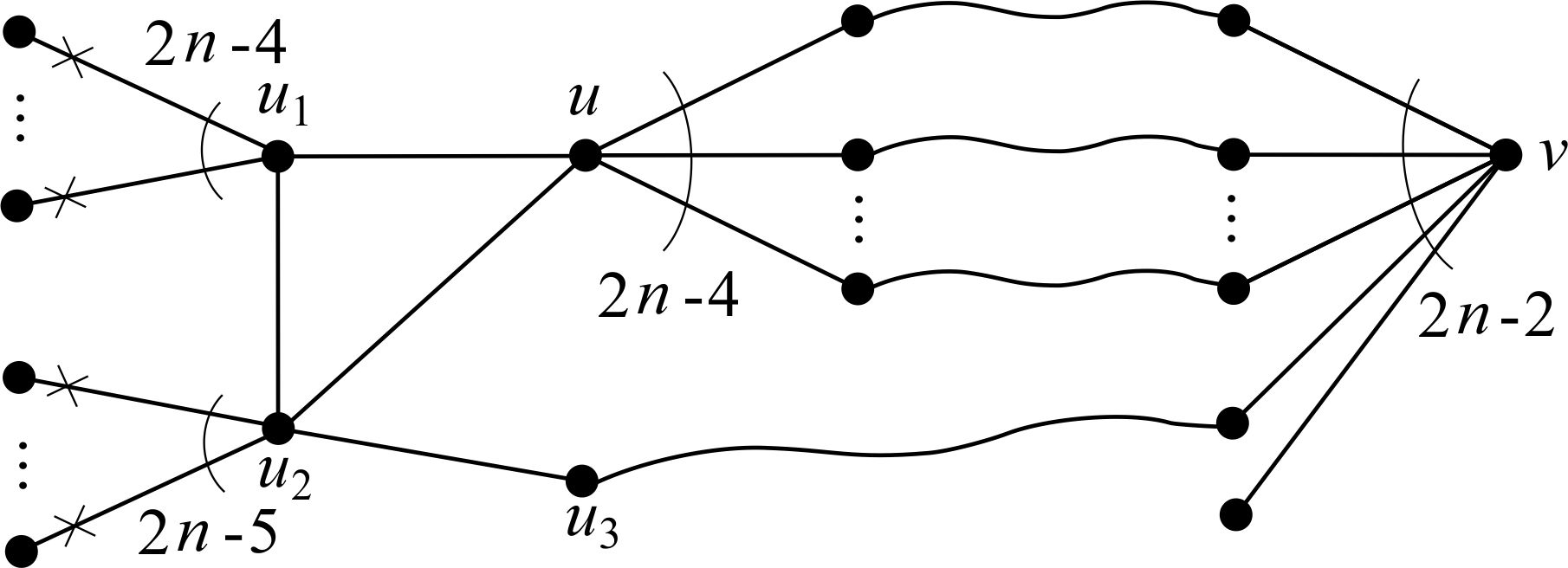}
			\caption{$L_n-S$ has no more than $(2n-3)$-edge-disjoint paths connecting $u$ and $v$.}
		\end{minipage}%
	}%
\end{figure}

\section{Conclusions}

In this paper, we study the edge-fault-tolerant strong Menger edge connectivity of the line graphs of $n$-dimensional hypercube-like networks. By exploring and utilizing the structural properties of $n$-dimensional hypercube-like network and their line graph, we show that the line graphs of $n$-dimensional hypercube-like networks is $(2n-4)$ edge-fault-tolerant strongly Menger edge connected for $n\geq 3$ and $(4n-10)$-conditional edge-fault-tolerant strongly Menger edge connected for $n\geq 4$. Our results are optimal with respect to the maximum number of faulty edges. Furthermore, the results also show that the line graphs of $n$-dimensional hypercube-like networks can tolerate more faulty edges than $n$-dimensional hypercube-like networks \cite{Li5} to maintain strongly Menger edge connected. Compared with other data center networks, the data center networks that constructed by the line graphs of hypercube-like networks are high network capacity and good fault-tolerant.

\section*{Acknowledgement}

This work was supported by the National Natural Science Foundation of China [Grant number, 11971406, 12171402].

\begin{appendices}
	
\section*{Appendix A}
	
For $n=4$, $L_4\in{\cal L}_4$ and $S\subset E(L_4)$, if $|S|\leq 11$, then $L_4-S$ has a connected component with at least $30$ vertices.
	
\begin{proof}
	
Assume $L_4=L(Q^1_{3}\oplus_fQ^2_{3})$. Let $E_1=E(L_{3}^1),E_2=E(L_{3}^2),E_f=E(F_4,L_{3}^1\cup L_{3}^2)$ and $S_1=S\cap E_1,S_2=S\cap E_2,S_f=S\cap E_f$. We can see that $E(L_4)=E_1\cup E_2\cup E_f$ and $S=S_1\cup S_2\cup S_f$ are partitions of $E(L_4)$ and $S$, respectively. Without loss of generality, we assume $|S_1|\leq |S_2|$ and hence, $|S_1|\leq \lfloor|S|/2\rfloor=5$. We consider four cases.

{\bf Case 1.} $L_3^1-S_1$ and $L_3^2-S_2$ are both connected.
		
By Proposition \ref{prop}, every vertex in $F_4$ is adjacent to $3$ vertices in $L_3^1$ and $3$ vertices in $L_3^2$. Since $|S|\leq 11$, except possible two vertices $u_0, u_1\in F_4$, every vertex in $F_4\setminus \{u_0,u_1\}$ is adjacent to both a vertex in $L_3^1$ and a vertex in $L_3^2$, this means that $L_4-S-\{u_0,u_1\}$ is connected, which has exactly $30$ vertices.

{\bf Case 2.} $L_3^1-S_1$ is disconnected and $L_3^2-S_2$ is connected.

Since $|S|\leq 11$, we have $|S_1|\leq 5$. On the other hand, by Lemma \ref{lem 2.4.}, $\lambda(L_3^1)=4$, therefore $|S_1|\geq 4$, meaning that $4\leq |S_1|\leq 5$ and $L_3^1-S_1$ has a component with an isolated vertex $v_0$. Further, since $|S_2|\geq |S_1|$, we have $|S_2|\geq 4$ and thus $|S_f|\leq |S|-|S_1|-|S_2|=11-8=3$, and by Proposition \ref{prop}, except possible one vertex $u_0\in F_4$, every vertex in $F_4\setminus \{u_0\}$ is adjacent to both a vertex in $L_3^1$ and a vertex in $L_3^2$, this means that $V(L_4)\setminus \{v_0,u_0\}$ is connected, which has exactly $30$ vertices.

{\bf Case 3.} $L_3^1-S_1$ is connected and $L_3^2-S_2$ is disconnected.
		
Since $\lambda(L_3^2)=4$ and $|S|\leq 11$.
		
{\bf Case 3.1.} $4\leq |S_2|\leq 5$
		
In this case, $L_3^2-S_2$ has a connected component $H_2$ with $11$ vertices and the other component has an isolated vertex, say $v_0\in L_3^2$. Since $|S_2|\geq 4$, we have  $|S_f|\leq |S|-|S_2|\leq 7$, this implies that, except two possible vertices $u_0,u_1\in F_4$, every vertex in $F_4\setminus \{u_0,u_1\}$ is adjacent to both a vertex in $L_3^1$ and a vertex in $L_3^2$. Further, at least one vertex in $F_4\setminus \{u_0,u_1\}$ is adjacent to a vertex in $H_2$. For otherwise, we would have $|S|\geq 2(|F_4|-2)=12>11$, because $|F_4|=2^3=8$ and every vertex in $F_4$ is adjacent to $3$ vertices in $L_3$ and hence adjacent to $2$ vertices in $H_2$. This is a contradiction. As a result, the subgraph of $L_4-S$ induced by $V(L_3^1)\cup V(H_2)\cup (F_4\setminus \{u_0,u_1\})$ is connected. Therefore, if $v_0$ is adjacent to a vertex in $F_4\setminus \{u_0,u_1\}$, then we obtain a connected component with vertex set $V(L_4)\setminus \{u_0,u_1\}$.

We now assume that $v_0$ is not adjacent to any vertex in $F_4\setminus \{u_0,u_1\}$. In this case, $v_0$ is adjacent to at least one vertex in $\{u_0,u_1\}$ or is an isolated vertex in $L_4-S$. This implies that the number of edges in $S$ incident with $v_0$ is at least $4$ and, conversely, $S$ has at most $7$ edges that are not incident with $v_0$. Notice that $u_0$ and $u_1$ has degree $12$ in $L_4$, meaning that $u_0$ and $u_1$ has at least $5$ in $L_4-S$. Therefore, $u_0,u_1$ has at least one vertex, say $u_0$, in $L_3^1\cup H_2$. Hence, the subgraph induced by $V(L_3^1)\cup V(H_2)\cup (F_4\setminus \{u_1\})$ is connected, which has exactly $30$ vertices.
		
{\bf Case 3.2.} $6\leq |S_2|\leq 11$
		
In this case, we have $ |S_f|\leq |S|-|S_2|\leq 5 $, $L_3^2-S_2$ has a connected component $H_2$ with $10$ vertices, so by Proposition \ref{prop}, except possible one vertex, say $u_0\in F_4$, every vertex in $F_4\setminus \{u_0\}$ is adjacent to both a vertex in $L_3^1$ and a vertex in $L_3^2$. On the other hand, $u_0$ has degree $6$ in $L_4$ and hence, $u_0$ is adjacent to at least one vertex in $L_3^1\cup H_2$. Therefore, we can obtain a connected component with vertices $V(L_3^1)\cup V(H_2) \cup F_4$.

{\bf Case 4.} $L_3^1-S_1$ and $L_3^2-S_2$ are both disconnected.
		
Since $4\leq |S_1|\leq |S_2|$ and $|S_1|\leq 5<6$, by Lemma \ref{lem 3.2.}, $L_3^1$ has a component $H_1$ with $11$ vertices and, hence $V(L_3^1-H_1)$ has only one vertex, say $v_0$. Notice that $|S_2|\leq |S|-|S_1|\leq 7$, $L_3^2$ has a connected component $H_2$ with at least $10$ vertices and, hence $V(L_3^2-H_2)$ has at most two vertices, say $w_0$ and $w_1$. Thus, $|S_f|=|S|-|S_1|-|S_2|\leq 11-8=3<6$. By Proposition \ref{prop}, except possible one vertex, say $u_0\in F_4$, every vertex in $F_4\setminus \{u_0\}$ is adjacent to both a vertex in $L_3^1$ and a vertex in $L_3^2$.

We now assume that none of $v_0$, $w_0$ and $w_1$ is adjacent to any vertex in $F_4\setminus\{u_0\}$. In this case, $v_0, w_0$ and $w_1$ are adjacent only to $u_0$ or are isolated vertices in $L_4-S$, respectively. This implies that the number of edges in $S$ incident with $v_0, w_0$ and $w_1$ is at least $14>11\geq |S|$. This is a contradiction. Therefore, at most two vertices of $\{v_0,w_0,w_1\}$, say $v_0$ and $w_0$, are not adjacent to vertices in $H_1 \cup H_2$. On the other hand, since $|S_f|\leq 3<6$ and by Proposition \ref{prop}, $u_0$ is adjacent to at least one vertices in $H_1 \cup H_2$. Hence, the subgraph induced by $V(L_4)\setminus\{v_0,w_0\}$ is connected, which has  $30$ vertices.
\end{proof}
	
\end{appendices}


\begin{thebibliography}{25}
	
	\bibitem{Abu-Libdeh} H. Abu-Libdeh, P. Costa, A. Rowstron, G. O’Shea, A. Donnelly. Symbiotic routing in future data centers. In Proc. ACM
	SIGCOMM, Aug.30-Sept.3, 2010, pp.51-62.
	
	\bibitem{Al-Fares} M. Al-Fares, A. Loukissas, and A. Vahdat, A scalable, commodity data center network architecture, ACM SIGCOMM Comput. Commun. Rev. 38 (4) (2008) 63-74.
	
	\bibitem{Bondy} J.A. Bondy and U.S.A. Murty, Graph theory with applications, Elsevier, New York (1976).
	
	\bibitem{Chartrand} G.S. Chartrand, M.J. Stewart, The connectivity of line-graphs, Math. Ann. 182 (1969) 170-174.
	
	\bibitem{Chen} Y. C. Chen, M. H. Chen, J.J.M. Tan, Maximally local connectivity and connected components of augmented cubes, Inform. Sci. 273 (2014) 387-392.
	
	\bibitem{Cheng2} Q. Cheng, P. S. Li, M. Xu, Conditional (edge-)fault-tolerant strong Menger (edge) connectivity of folded hypercubes, Theoret. Comput. Sci. 728 (2018) 1-8.
	
	\bibitem{Cull} P. Cull, S.M. Larson, The M$\ddot{o}$bius cubes, IEEE Trans. Comput. 44 (1995) 647-659.
	
	\bibitem{Efe} K. Efe, The crossed cube architecture for parallel computation, IEEE Trans. Parallel and Distrib. Syst. 3 (5) (1992) 513–524.
	
	\bibitem{Fan} J.X. Fan, L.Q. He, BC interconnection networks and their properties, Chin. J. Comput. 26 (1) (2003) 84-90.
	
	\bibitem{Guo} C. X. Guo, G. H. Lu, D. Li, H. T. Wu, X. Zhang, Y. F. Shi, C. Tian, Y. G. Zhang, S. W. Lu, BCube: A high performance, servercentric network architecture for modular data centers, In Proc. the ACM SIGCOMM Conf. Data Communication. (2009) 63-74.
	
	\bibitem{Kan} S.X. Kan, J.X. Fan, B.L. Cheng, X. Wang, The Communication Performance of BCDC Data Center Network, 2020 12th International Conference on Communication Software and Networks (ICCSN). 2020, 51-57.
	
	\bibitem{Li4} P.S. Li, M. Xu, Fault-tolerant strong Menger (edge) connectivity and $3$-extra edge-connectivity of balanced hypercubes, Theoret. Comput. Sci. 707(2018) 56-68.
	
	\bibitem{Li5} P.S. Li, M. Xu, Edge-fault-tolerant strong Menger edge connectivity on the class of hypercube-like networks, Discrete Appl. Math. 259 (2019) 145-152.
	
	\bibitem{Li6} X.J. Li, J.M. Xu, Edge-fault-tolerance of hypercube-like networks, Inform. Process. Lett. 113 (19-21) (2013) 760-763.
	
	\bibitem{Lv1} M. Lv, S. Zhou, X. Sun, G. Lian, and J. Liu, Reliability evaluation of data center network DCell, Parallel Processing Letters. 28(04) (2018) 1850015.
	
	\bibitem{Lv2} M. Lv, B. Cheng, J. Fan, X. Wang, J. Zhou and J. Yu, The Conditional Reliability Evaluation of Data Center Network BCDC, The Computer Journal. 64 (9) (2020) 1451-1464.
	
	\bibitem{Menger} K. Menger, Zur allgemeinen kurventheorie, Fundamenta Mathematicae, 10 (1) (1927) 96-115.
	
	\bibitem{Ma} M. J. Ma, J. G. Yu, Note Edge-disjoint paths in faulty augmented cubes, Discrete Appl. Math. 294 (2021) 108-114.
	
	\bibitem{Oh} E. Oh, J. Chen, On strong Menger-connectivity of star graphs, Discrete Appl. Math. 129 (2-3)(2003) 499-511.
	
	\bibitem{Qiao} Y. Qiao, W. Yang, Edge disjoint paths in hypercubes and folded hypercubes with conditonal faults. Appl. Math. Comput. 294 (2017) 96-101.
	
	\bibitem{Vaidya} A.S. Vaidya, P.S.N. Rao, S.R. Shankar, A class of hypercube-like networks, in: Proc. of the 5th IEEE Symposium on Parallel and Distributed Processing, (1993) 800-803.	
	
	\bibitem{Wang} X. Wang, J. X. Fan, C.-K. Lin, and J. Y. Zhou, BCDC: A high-performance, server-centric data center network. J. Comput. Sci. Technol. 33 (2018) 400-416.
	
	\bibitem{Yang} X. Yang, D.J. Evans, G.M. Megson, The locally twisted cubes, Int. J. Comput. Math. 82 (4) (2005) 401-413.
	
	\bibitem{Zhang} Fuji Zhang, Guoning Lin, The line graph of hypercube. Journal of Xinjiang University: Natural Science Edition, 10 (4) (1993) 1-4.
	
	\bibitem{Zhou} J. X. Zhou. On $g$-extra connectivity of hypercube-like networks. J. Comput. Syst. Sci. 88 (2017) 208-219.	
	
\end{thebibliography}
\end{document}